\def\argmax{\mbox{argmax}}
\def\a{\alpha}
\def\b{\beta}
\def\R{\mathbb R}
\def\d{\delta}
\def\E{{\mathbb E}}
\def\F{{\mathbb F}}
\def\P{{\mathbb P}}
\def\g{\gamma}
\def\labda1{\lambda_1}
\def\labda2{\lambda_2}
\def\m{\mu}
\def\n{\nu}
\def\e{\varepsilon}
\def\f{\phi}
\def\comment#1{\relax}
\def\=in{\mathop{\rm =}}
\newtheorem{theorem}{Theorem}[section]
\newtheorem{lemma}{Lemma}[section] 
\newtheorem{example}{Example}[section]
\newtheorem{remark}{Remark}[section]
\newtheorem{condition}{Condition}[section]
\numberwithin{equation}{section}
\theoremstyle{plain}
\def\a{\gamma}
\def\m{\mu}
\def\n{\nu}
\def\th{\theta}
\def\P{{\mathbb P}}
\def\F{{\mathbb F}}
\def\bmA{\bm A}
\def\bmz{\bm z}
\def\bms{\bm s}
\def\a{\alpha}
\begin{document}
\begin{frontmatter}
\title{Grenander functionals and Cauchy's formula}
\runtitle{Grenander functionals}

\begin{aug}
\runauthor{Piet Groeneboom}
\author{\fnms{Piet} \snm{Groeneboom}\corref{}\ead[label=e1]{P.Groeneboom@tudelft.nl}
\ead[label=u1,url]{http://dutiosc.twi.tudelft.nl/\textasciitilde pietg/}}
\address{Delft University of Technology, Building 28, Van Mourik Broekmanweg 6, 2628 XE Delft, The Netherlands.\\ \printead{e1}}
\end{aug}

\thankstext{t2}{This manuscript is dedicated to the memory of Ronald Pyke}

\begin{abstract}
Let $\hat f_n$ be the nonparametric maximum likelihood estimator of a decreasing density. Grenander characterized this in \cite{Grenander:56} as the left-continuous slope of the least concave majorant of the empirical distribution function. For a sample from the uniform distribution, the asymptotic distribution of the $L_2$-distance of the Grenander estimator to the uniform density was derived in \cite{piet_ron:83} by using a representation of the Grenander estimator in terms of conditioned Poisson and gamma random variables. This representation was also used in \cite{GroLo:93} to prove a central limit result of Sparre Andersen \cite{sparre:54} on the number of jumps of the Grenander estimator. Here we extend this to the proof of the main result in \cite{piet_ron:83} and also prove a similar asymptotic normality results for the entropy functional. In \cite{piet_ron:83} the limit distribution of the sums of gamma and Poisson variables on which the conditioning was done did not have the right form, which is corrected here. Cauchy's formula  and saddle point methods are the main tools in our development.
\end{abstract}

\begin{keyword}[class=AMS]
\kwd[Primary ]{62E20, }
\kwd[Secondary ]{62G05}
\end{keyword}

\begin{keyword}
\kwd{Grenander estimator, integral statistics, saddle points, Cauchy's formula}
\end{keyword}

\end{frontmatter}

\section{Introduction}
\label{sec:intro}
The Grenander estimator is the (nonparametric) maximum likelihood estimator (MLE) of a monotone decreasing density. It was introduced in \cite{{Grenander:56}}, where it was proved that it is the left-continuous slope of the least concave majorant of the empirical distribution function. Some properties and limit results  are discussed in \cite{piet_geurt:14} and also in \cite{piet_geurt:18} in the special issue on nonparametric inference under shape constraints of the journal Statistical Science. The Grenander estimator is shown in Figure \ref{fig:picture_Grenander} for a sample of size $n=100$ from the uniform distribution on $[0,1]$. It can be improved by using boundary penalties (in fact, the estimator is inconsistent at the boundary points $0$ and $1$), but this is not the concern of the present paper.

\begin{figure}[!ht]
	\centering
	\begin{subfigure}{0.45\linewidth}
		\includegraphics[width=0.95\textwidth]{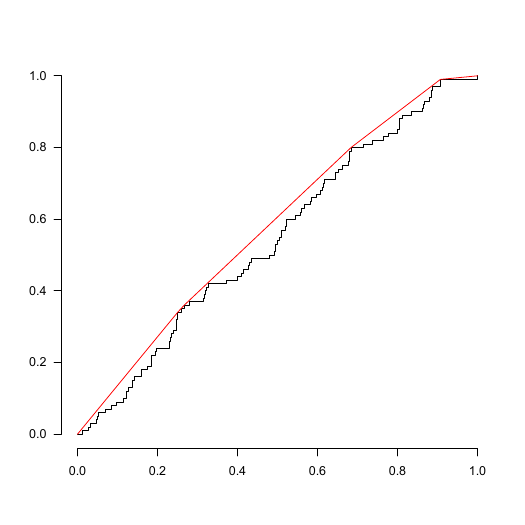}
		\caption{empirical distribution function (black) and its least concave majorant (red)}
	\end{subfigure}
	\begin{subfigure}{0.45\linewidth}
	\includegraphics[width=0.95\textwidth]{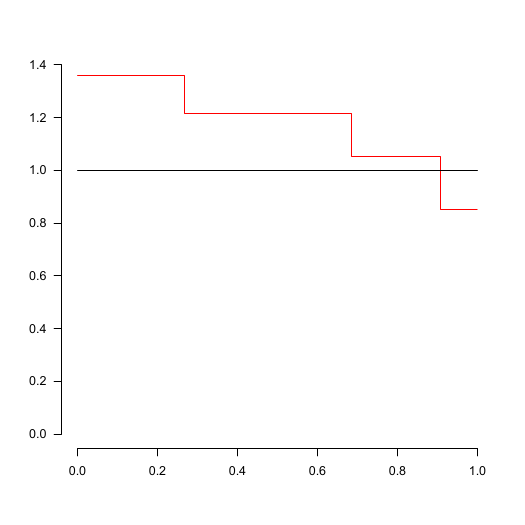}
	\caption{Grenander estimator}
	\end{subfigure}
	\caption{Left: the empirical distribution function and its least concave majorant and right: the Grenander estimator, on the basis of a sample of size $n=100$ from the uniform distribution on $[0,1]$}
	\label{fig:picture_Grenander}
\end{figure}

The Grenander estimator is a piecewise constant function with downward jumps at locations which correspond to the changes of slope (``kinks") of the least concave majorant of the empirical distribution function. Although the Grenander estimator is defined as the left-continuous slope of the empirical distribution function, we can make the Grenander estimator right-continuous by taking the limits on the right at its points of jump. This does not change the probability mass of the induced (absolutely continuous) probability distribution, which is absolutely continuous w.r.t.\ Lebesgue measure.

The number of jumps of the Grenander estimator is of order $\log n$ if the sample is from a uniform distribution (see Section \ref{sec:Sparre_Andersen}), if the sample comes from a strictly decreasing smooth density like the exponential density, then the number of jumps is of order $cn^{1/3}$, for some constant $c>0$. The limit behavior of the Grenander estimator for these situations is rather different. For a sample from the uniform distribution, we have for $t\in(0,1)$:
\begin{align}
\label{limit_Grenander_uniform}
\sqrt{n}\left\{\hat f_n(t)-1\right\}\stackrel{{\cal D}}\longrightarrow S_t,
\end{align}
where $S_t$ is the slope of the least concave majorant of the standard Brownian Bridge on $[0,1]$, see Remark 2.2, p.\ 543 of \cite{piet:85}. The density of $S_t$ is a function of the standard normal distribution function $\Phi$ and the standard normal density $\f$, see (3.11) in \cite{piet:83}.

In contrast with the result (\ref{limit_Grenander_uniform}), we get for a sample from a decreasing density $f$ on $[0,\infty)$ at a point $t\in(0,\infty)$, where $f$ is differentiable and $f'(t)<0$, the following result, due to Prakasa Rao in \cite{prakasa:69}:
\begin{align}
\label{limit_Grenander_decreasing}
n^{1/3}\left|4f(t)f'(y)\right|^{-1/3}\left\{\hat f_n(t)-f(t)\right\}\stackrel{{\cal D}}\longrightarrow Z,
\end{align}
where $Z=\argmax_t\{W(t)-t^2\}$, that is: $Z$ is the (almost surely unique) \text{\rm location of the maximum} of two-sided Brownian motion minus the parabola $y(t)=t^2$.
For further details, see, e.g., \cite{piet_geurt:14} and \cite{piet_geurt:18}.

Recently, integrated functionals of a monotone density were studied in \cite{bodhi:19}. Using the same notation as in \cite{bodhi:19} the following functionals were studied:
\begin{align*}
\m(h,f)=\int_0^1 h(f(x))\,dx,
\end{align*}
where $f$ is a nonincreasing function on $\R_+$ and $h$ satisfies some regularity conditions.
In the case that the underlying distribution is uniform, the following central limit result is proved:

\begin{theorem}[Theorem 3.2 in \cite{bodhi:19}]
\label{th:bodhi}
Let $h\in C^4([0,\infty))$ and let $h''(1)\ne0$. Then:
\begin{align}
\label{central_limit_uniform}
\frac{n\left\{\m(h,\hat f_n)-\m(h,f)\right\}-\tfrac12h''(1)\log n}{\sqrt{\tfrac34h''(1)^2\log n}}\stackrel{{\cal D}}\longrightarrow N(0,1),
\end{align}
where $N(0,1)$ denotes the standard normal distribution.
\end{theorem}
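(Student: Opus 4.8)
The plan is to combine the representation of $\hat f_n$ in terms of (conditioned) Poisson and gamma variables with Cauchy's formula and a saddle point estimate, as the abstract advertises. \emph{First}, reduce to the case $h(1)=h'(1)=0$: since $\int_0^1\hat f_n(x)\,dx=1$ one has $\m(h,\hat f_n)-\m(h,f)=\m(r,\hat f_n)-\m(r,f)$ with $r(u):=h(u)-h(1)-h'(1)(u-1)$, and $r(1)=r'(1)=0$, $r''(1)=h''(1)=:\beta\ne0$; so we may replace $h$ by $r$. \emph{Second}, pass to the inverse picture: the least concave majorant of the empirical distribution function of a uniform sample is the greatest convex minorant of the broken line through $(k/n,U_{(k)})$, $0\le k\le n$ (plus the endpoint $(1,1)$), and $U_{(k)}\stackrel{d}{=}\Gamma_k/\Gamma_{n+1}$ with $\Gamma_k=E_1+\dots+E_k$, $E_i$ i.i.d.\ standard exponential. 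Up to boundary terms of order $n^{-1}$, the faces of this convex minorant form a finite family $\{(\ell_i,\gamma_i)\}$ in which the $\ell_i$ are the cycle lengths of a uniform random permutation of $\{1,\dots,n+1\}$ and, given these, the $\gamma_i$ are independent with $\gamma_i$ gamma$(\ell_i,1)$ --- this is the Poissonised form of the Poisson/gamma representation. On the face $(\ell,\gamma)$ the estimator equals $\ell\,\Gamma_{n+1}/(n\gamma)$ on an interval of length $\gamma/\Gamma_{n+1}$; since $\Gamma_{n+1}/n=1+O_p(n^{-1/2})$ and $r$, $r'$ vanish at $1$, a short computation then gives
\begin{equation*}
n\bigl\{\m(h,\hat f_n)-\m(h,f)\bigr\}=T_n+o_p(1),\qquad T_n:=\sum_i\phi(\ell_i,\gamma_i),\quad\phi(\ell,\gamma):=\gamma\,r(\ell/\gamma).
\end{equation*}

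The heart of the matter is the characteristic function of $T_n$. With $g_\ell(t):=\mathbb E\,e^{it\phi(\ell,\gamma)}$ for $\gamma$ gamma$(\ell,1)$, Cauchy's formula for the cycle structure of a random permutation (cycle type $(m_1,m_2,\dots)$ has probability $\prod_\ell(\ell^{m_\ell}m_\ell!)^{-1}$ on $\sum_\ell\ell m_\ell=n+1$) gives the exact identity
\begin{equation*}
\mathbb E\,e^{itT_n}=[z^{n+1}]\exp\Bigl(\sum_{\ell\ge1}\frac{g_\ell(t)}{\ell}z^\ell\Bigr)=\frac1{2\pi i}\oint\frac{1}{z^{n+2}}\exp\Bigl(\sum_{\ell\ge1}\frac{g_\ell(t)}{\ell}z^\ell\Bigr)\,dz .
\end{equation*}
By the law of large numbers $\phi(\ell,\gamma)\to\tfrac\beta2\chi^2_1$ as $\ell\to\infty$, and a fourth-order Taylor expansion of $r$ at $1$ (here $h\in C^4$ is used) yields $g_\ell(t)=\psi(t)+O(1/\ell)$ uniformly for $t$ near $0$, with $\psi(t):=(1-i\beta t)^{-1/2}$. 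Hence $\sum_{\ell\ge1}\ell^{-1}g_\ell(t)z^\ell=\psi(t)\log\frac1{1-z}+B(z,t)$, $B(\cdot,t)$ bounded and continuous up to $z=1$, so the integrand carries the algebraic singularity $(1-z)^{-\psi(t)}$ at $z=1$, and a saddle point (Hankel contour) evaluation gives $\mathbb E\,e^{itT_n}=C(t)\,n^{\psi(t)-1}(1+o(1))$ with $C(t)\to1$ as $t\to0$.

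It remains to identify the limit. From $\psi(t)-1=\tfrac12 i\beta t-\tfrac38\beta^2t^2+O(t^3)$, substituting $t=s/\sqrt{\tfrac34\beta^2\log n}$ gives $(\psi(t)-1)\log n=\tfrac12 i\beta t\log n-\tfrac12 s^2+o(1)$, and the first term is exactly the recentring $\tfrac12\beta\log n$ of (\ref{central_limit_uniform}) divided by the normalisation; hence the characteristic function of $\bigl(T_n-\tfrac12\beta\log n\bigr)/\sqrt{\tfrac34\beta^2\log n}$ tends to $e^{-s^2/2}$, which with the first paragraph and $\beta=h''(1)$ is (\ref{central_limit_uniform}).

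The principal obstacle --- the point of the paper --- is the second half of the first paragraph: establishing the precise Poisson/gamma representation of $\hat f_n$ and, through it, the \emph{correct} limiting law of the sums of gamma and Poisson variables on which one conditions (the place where \cite{piet_ron:83} was inaccurate); once that is in hand the conditioning is resolved by the coefficient extraction in Cauchy's formula. A lesser but genuine difficulty is the uniformity of the saddle point estimate for $t$ in a shrinking neighbourhood of $0$: one must control $\sum_\ell\ell^{-1}(g_\ell(t)-\psi(t))z^\ell$ near $z=1$ and separately dispose of the finitely many short faces (small $\ell$), whose contributions $\phi(\ell,\gamma)$ may be heavy-tailed but are $o_p(\sqrt{\log n})$ and hence negligible.
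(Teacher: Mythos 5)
Your outline is sound and the constants all check out ($\psi(t)=(1-ih''(1)t)^{-1/2}$, $\psi(t)-1=\tfrac12 ih''(1)t-\tfrac38h''(1)^2t^2+O(t^3)$, giving exactly the centring $\tfrac12h''(1)\log n$ and variance $\tfrac34h''(1)^2\log n$), but the route is genuinely different from the paper's. First, be aware that the paper never proves this statement directly: it is quoted from \cite{bodhi:19} and recovered only indirectly, by proving an analogue (Theorem \ref{th:main_theorem}) for $h$ \emph{analytic} on the right half plane and noting that \cite{bodhi:19} deduces the $C^4$ version from the $L_2$ special case. The paper's proof of its own theorem keeps the full Groeneboom--Pyke representation conditioned on $(S_n,T_n)=(n,n)$, performs a double Fourier inversion (in $u$ for the lattice variable $T_n$ and in $t$ for the continuous variable $V_n$), and needs a saddle point analysis of each gamma characteristic function $\f_{nj}(s,t)$ in the complex $x$-plane --- this is where analyticity of $h$ enters --- before Cauchy's formula extracts the $z^n$ coefficient. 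You instead discharge the gamma-sum conditioning by self-normalising with $\Gamma_{n+1}$, keep only the permutation/cycle structure, and do a single coefficient extraction followed by singularity analysis at $z=1$ with the $t$-dependent exponent $\psi(t)$. What your route buys is substantial: no conditional CLT, no complex-plane saddle point, and hence no analyticity of $h$ --- only $C^4$ near $1$ plus boundedness of characteristic functions --- so you would actually prove the stated $C^4$ theorem directly, which the paper does not.

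Two places need more than you give them. (i) The reduction to $T_n$: the error from replacing $\hat f_n=\ell_i\Gamma_{n+1}/(n\gamma_i)$ by $\ell_i/\gamma_i$ is not $o_p(1)$ as you claim; the cross term $(\rho-1)\sum_i\ell_i r'(\ell_i/\gamma_i)$ with $\rho=\Gamma_{n+1}/n$ is only $O_p(1)$ after exploiting the cancellation in $\sum_i(\ell_i-\gamma_i)=n-\Gamma_n=O_p(\sqrt n)$ (with absolute values inside the sum you would get $O_p(\sqrt{\log n})$, which is \emph{not} negligible). $O_p(1)=o_p(\sqrt{\log n})$ suffices, but the signed cancellation must be made explicit. (ii) The coefficient asymptotics: unlike the paper's Lemma \ref{lemma:region_A}, where the perturbation $\d_n$ of the exponent is the \emph{same} for every $j$ and the extraction reduces to an exact binomial identity, your $g_\ell(t)$ varies with $\ell$, so you need a Selberg--Delange/transfer argument for $[z^{n+1}]e^{B(z,t)}(1-z)^{-\psi(t)}$ that is \emph{uniform} for $t=s/b_n\to0$; since the coefficients of $B(\cdot,t)$ are only $O(\ell^{-3/2})$ (your $O(1/\ell)$ is optimistic --- the skewness of the gamma contributes at order $\ell^{-1/2}$ to $g_\ell-\psi$), $B$ lies in the Wiener algebra but is not analytic past $|z|=1$, so the convolution form of the transfer, not the $\Delta$-domain form, is the one to use. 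Both points are repairable; neither is a fatal gap.
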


We prove an analogous result for analytic functions $h$, defined on the positive open complex half plane. So our functions $h$ have a lot more smoothness, but are on the other hand defined on the {\it open} complex half plane, which makes the result applicable to functions that are not covered by the conditions in \cite{bodhi:19}. We assume that $h$ satisfies the following condition.

\begin{condition}
\label{condition_h}
{\rm
The function $h$ is analytic on the complex half plane $\{z\in\mathbb C:\text{Re}(z)>0\}$, and satisfies the following conditions.
\begin{enumerate}
\item[(i)] $h''(1)\ne0$.
\item[(ii)] 
For $t\in\R$:
\begin{align}
\label{condition1} 
\left|h(1-it)\right|=O(t^2),\qquad |t|\to\infty,
\end{align}
and
\item[(iii)]
For $t\in\R$:
\begin{align}
\label{condition2}
\left|h''(1-it)\right|=O(1),\qquad |t|\to\infty.
\end{align}
\end{enumerate}
}
\end{condition}

We now have the following result.

\begin{theorem}
\label{main_theorem}
Let $h$ satisfy Condition \ref{condition_h} and let $\hat f_n$ be the Grenander estimator. Then
\begin{align}
\label{central_limit_uniform2}
\frac{n\left\{\m(h,\hat f_n)-\m(h,f)\right\}-\tfrac12h''(1)\log n}{\sqrt{\tfrac34h''(1)^2\log n}}\stackrel{{\cal D}}\longrightarrow N(0,1),
\end{align}
where $N(0,1)$ denotes the standard normal distribution.
\end{theorem}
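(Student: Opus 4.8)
The plan is to strip off the linear part of $h$ using that $\hat f_n$ is a probability density, represent what is left by Cauchy's formula, and then analyse the resulting linear statistic of the ``blocks'' of $\hat f_n$ through the Poissonized representation of Groeneboom and Pyke together with a saddle point computation.

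First I would use mass conservation. Write $\hat f_n$ (in its right-continuous version) as the step function with decreasing values $a_1>a_2>\dots>a_{M_n}>0$ on successive intervals of lengths $\ell_1,\dots,\ell_{M_n}$, so that $\sum_j\ell_j=1$ and $\sum_ja_j\ell_j=\int_0^1\hat f_n(x)\,dx=1$; since also $\m(h,f)=h(1)$, this gives
\begin{align*}
\m(h,\hat f_n)-\m(h,f)=\sum_{j=1}^{M_n}\Bigl\{h(a_j)-h(1)-h'(1)(a_j-1)\Bigr\}\ell_j .
\end{align*}
Thus the linear part of $h$ at $1$ drops out and only the second-order remainder contributes; in particular $h(z)=z^2$ returns $\int_0^1(\hat f_n-1)^2\,dx$, so the theorem contains the (corrected) main result of \cite{piet_ron:83} as a special case. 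To treat all the $a_j$ on the same footing --- including those away from $1$, which occur near the boundary where $\hat f_n$ is inconsistent --- I would represent the bracket by Cauchy's formula,
\begin{align*}
h(w)-h(1)-h'(1)(w-1)=\frac{(w-1)^2}{2\pi i}\int_{\mathcal C}\frac{h(z)\,dz}{(z-w)(z-1)^2},
\end{align*}
with $\mathcal C$ a small contour around $w$ and $1$. Summing over $j$, the poles at the $a_j$ assemble into the Stieltjes transform $G_n$ of $\sum_j\ell_j\delta_{a_j}$, and $\widetilde G_n(z):=G_n(z)-(z-1)^{-1}$ decays like $\bigl(\int_0^1(\hat f_n-1)^2\,dx\bigr)z^{-3}$ at infinity; using this decay together with Condition~\ref{condition_h}(ii)--(iii) one may open the contour up to the line $\mathrm{Re}(z)=1$ (indented around $z=1$), obtaining
\begin{align*}
n\{\m(h,\hat f_n)-\m(h,f)\}=\frac{n}{2\pi i}\int_{\mathrm{Re}(z)=1}h(z)\,\widetilde G_n(z)\,dz ,
\end{align*}
a conditionally convergent integral of $h(z)$ against a kernel of size $O(|z|^{-3})$. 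It therefore suffices to obtain a central limit theorem, jointly in $z$, for $n\widetilde G_n(z)$ and then integrate.

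For that I would invoke the representation of the uniform Grenander estimator in terms of conditioned Poisson and gamma variables used in \cite{piet_ron:83} and \cite{GroLo:93}. Poissonizing the sample size ($N\sim\mathrm{Poisson}(n)$), the block data $(a_j,\ell_j)$ --- equivalently the per-block counts $N_j$ and lengths $\GG_j$ --- are built from independent Poisson/gamma pairs subject to the decreasing-slope (concavity) constraint and to the conditioning event $\{\sum_jN_j=N,\ \sum_j\GG_j=1\}$; in these variables $n\widetilde G_n(z)$ becomes a sum over the blocks. I would compute its conditional characteristic function by writing the two constraints as contour integrals of the associated bivariate generating function and evaluating the ratio of integrals by the saddle point method (the saddle being fixed by the constraints, $n\to\infty$). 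The number of blocks is $M_n\sim\log n$ by the Sparre Andersen result recalled in Section~\ref{sec:Sparre_Andersen}, and the leading saddle point term should yield that the cumulant generating function of the centered statistic at argument $t$ equals $\tfrac12h''(1)\,t\log n+\tfrac12\bigl(\tfrac34h''(1)^2\log n\bigr)t^2+o(\log n)$, i.e.\ the limit $N\bigl(\tfrac12h''(1)\log n,\ \tfrac34h''(1)^2\log n\bigr)$ and hence the claim. Here $h$ enters only through $h''(1)$ in leading order, which is why the limit coincides with the one in Theorem~\ref{th:bodhi}. The $\log n$ in both the centering and the variance comes from a logarithmically divergent integral over the window of slopes within $O(n^{-1/2})$ of $1$, cut off at scale $1/n$, where the Poisson/gamma structure leaves its Gaussian approximation --- which is exactly why the precise representation, rather than a Donsker-type argument, is needed, and the analyticity and growth hypotheses on $h$ (rather than merely $C^4$) allow the boundary slopes, far from $1$, to be absorbed into the contour integral on $\mathrm{Re}(z)=1$.

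It remains to de-Poissonize --- passing from $N\sim\mathrm{Poisson}(n)$ to fixed $n$ by a standard depoissonization estimate, the fluctuations being of order $\sqrt{\log n}$ and hence much larger than the $O(1/\sqrt n)$ effect of randomizing $N$ --- and to check that the discarded terms (the blocks with $a_j$ bounded away from $1$, the boundary region, and the remainder of the saddle point expansion) are $o_p(\sqrt{\log n})$, using Condition~\ref{condition_h}(ii)--(iii) once more and $h''(1)\neq0$ for non-degeneracy. I expect the saddle point analysis of the conditioned, many-block sum to be the main obstacle: one must follow the joint effect of the two linear constraints $\sum_jN_j=N$ and $\sum_j\GG_j=1$ and of the concavity constraint on the covariance structure of the blocks, and keep all error terms uniform while the number of blocks itself grows like $\log n$. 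It is precisely at this step that the variance constant $\tfrac34h''(1)^2$ is delicate --- this is where the earlier treatment in \cite{piet_ron:83} went wrong --- with the rigorous justification of the contour deformation onto $\mathrm{Re}(z)=1$ and the estimation of the boundary contributions from the growth of $h$ a second, more technical, source of difficulty.
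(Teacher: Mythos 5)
Your opening step is exactly the paper's: writing $\m(h,\hat f_n)-\m(h,f)=\sum_j\{h(a_j)-h(1)-h'(1)(a_j-1)\}\ell_j$ is, in the Poisson/gamma variables, precisely the definition (\ref{def_U_n}) of $U_n$ together with Remark \ref{remark_effect_conditioning} (the terms $h'(1)(S_{ji}-j)$ vanish under the conditioning and $h(1)\sum S_{ji}=n\m(h,f)$). The overall skeleton --- exact representation by conditioned Poisson and gamma variables, Fourier/contour handling of the two constraints, saddle point asymptotics --- is also the paper's. But the proposal stops exactly where the proof begins. The entire content of the theorem is the claim that the conditional cumulant generating function equals $\tfrac12 h''(1)t\log n+\tfrac38h''(1)^2t^2\log n+o(\log n)$; you write that the leading saddle point term ``should yield'' this and defer it as ``the main obstacle.'' In the paper this is Lemma \ref{lemma:charfu_phi_n} (a saddle point expansion of each block characteristic function $\f_{nj}$ near $z_0(t)=(1-it/c_n)^{-1}$, which is where $\tfrac12 h''(1)/b_n$ and $\tfrac38 h''(1)^2/b_n^2$ arise from $f_{n,s,t}''(z_0)$), Lemma \ref{lemma:region_A} (Cauchy's formula to extract the coefficient of $z^n$, turning the lattice constraint into the product $\prod_j(1+(\d_n(t)-1)/j)$ whose logarithm supplies the harmonic sum $\sum 1/j\sim\log n$), and the two tail lemmas controlling $M<|t|\le\d\sqrt n$ and $|t|>\d\sqrt n$, the last of which is where Condition \ref{condition_h}(ii)--(iii) is actually used. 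None of this is derived in the proposal, so the constants $\tfrac12h''(1)$ and $\tfrac34h''(1)^2$ are asserted, not proved.

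Two of the steps you do describe would not go through as stated. First, the de-Poissonization: the representation theorem is exact for fixed $n$ (the blocks are distributed as independent Poisson/gamma variables conditioned on $(S_n,T_n)=(n,n)$, with no auxiliary Poissonization of the sample size and no additional concavity constraint on the blocks --- that is the whole point of the representation). What must be handled is conditioning on the exact lattice value $T_n=n$, and a comparison of fluctuation sizes (``$\sqrt{\log n}$ versus $O(1/\sqrt n)$'') cannot do this; one needs a local inversion, which is exactly the $\int_{-\pi}^\pi\cdots du$ integral plus Cauchy's formula in the paper. Second, the contour deformation onto $\mathrm{Re}(z)=1$: the slopes $a_j$ of the Grenander estimator lie on both sides of $1$ (and range over essentially all of $(0,\infty)$ near the boundaries), so a contour pushed to the line $\mathrm{Re}(z)=1$ cannot enclose all the poles of your Stieltjes transform; the Taylor-remainder identity is fine for a small contour around each $a_j$ and $1$, but the global deformation you rely on to invoke Condition \ref{condition_h}(ii)--(iii) fails. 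In the paper those growth conditions enter instead through the saddle point $z_0(t)=(1-it)^{-1}$ in Lemma \ref{lemma:region_A_2}, which traces out exactly the circle through $0$ and $1$ on which $h(1-it)$ and $h''(1-it)$ must be controlled. In short: correct skeleton, but the central computation and the uniform control of the Fourier variable are missing, and the two auxiliary devices you propose in their place do not work.
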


The result is a corollary to Theorem \ref{th:main_theorem} in Section \ref{sec:bodhi}, see the remark at the end of Section \ref{sec:bodhi}.

\vspace{0.3cm}
Examples of  the application of Theorem \ref{main_theorem} are:

\begin{example}
\label{results_piet_ron}
{\rm
Let $h(z)=(z-1)^2$. Then $h'(z)=2(z-1)$ and $h''(z)=2$. The function $h$ is obviously analytic on the positive complex half plane.
Condition \ref{condition_h} is fulfilled, so we get
\begin{align}
\label{main_result_piet_ron}
\frac{1}{\sqrt{3\log n}}\left\{n\int_0^1\bigl\{\hat f_n(t)-1\bigr\}^2\,dt-\log n\right\}\stackrel{{\cal D}}\longrightarrow N(0,1),\qquad n\to\infty.
\end{align}
This is the main result of \cite{piet_ron:83}. Since in \cite{bodhi:19} Theorem 3.2 is deduced from this result, we also get Theorem 3.2 in \cite{bodhi:19} back from Theorem \ref{main_theorem}.
}
\end{example}

\begin{example}
\label{results_entropy_functional}
{\rm
Let $h(z)=z\log z$. Then $h'(z)=1+\log(z)$ and $h''(z)=1/z$. The function $h$ is again analytic on the positive complex half plane.
Condition \ref{condition_h} is fulfilled, and we get:
\begin{align}
\label{entropy_func}
\frac{1}{\sqrt{\tfrac34\log n}}\left\{n\int_0^1\hat f_n(t)\log\hat f_n(t)\,dt-\tfrac12\log n\right\}\stackrel{{\cal D}}\longrightarrow N(0,1),\qquad n\to\infty.
\end{align}
For this example the conditions of  Theorem 3.2 in \cite{bodhi:19} are not satisfied. The result follows from Theorem \ref{main_theorem} and can be applied to the theory on a likelihood ratio test for monotonicity in \cite{chan:18}.
}
\end{example}

To derive limit results for the uniform distribution, a special representation in terms of gamma and Poisson random variables was given in \cite{piet_ron:83}, with the purpose of proving a limit result for a two-sample rank statistic introduced in the dissertation of \cite{behnen:74} and also for a test statistic in the combination of tests in \cite{scholz:83}. We describe this representation now.

Let $X_1,\dots,X_n$ be a sample from the uniform distribution, and let $0=\xi_{n0}<\xi_{n1}<\dots<\xi_{nm}<\xi_{n,m+1}=1$ be the locations of the jumps of the Grenander estimator $\hat f_n$ for this sample, augmented with the points $0$ and $1$. Note that $[\xi_{n,0},\xi_{n,1}]$, $(\xi_{n,1},\xi_{n2}]$, $(\xi_{n,2},\xi_{n3}]$, $\dots$, $(\xi_{n,m},1]$ are the successive intervals of constancy of the Grenander estimator if we take the estimator to be left-continuous.

 Furthermore, let $D_{ni}$, $J_{ni}$ and $Q_{nj}$ be defined by:
\begin{align*}
\begin{array}{ll}
D_{ni}=\xi_{ni}-\xi_{n,i-1}\qquad &,i=1,\dots,m+1,\\
J_{ni}=n\left\{\F_n(\xi_{ni})-\F_n(\xi_{n,i-1})\right\}\qquad &,i=1,\dots,m+1,\\
Q_{nj}=\#\left\{i:J_{ni}=j\right\}, &
\end{array}
\end{align*}
where $\F_n$ is the empirical distribution function of the sample $X_1,\dots,X_n$, and where $m$ is the number of jumps of the Grenander estimator.

Next, let $\{N_j:j\ge1\}$ be independent Poisson random variables with $\E N_j=1/j$, and let, for each $i$, $\{S_{ji},\,i,j\ge1\}$ be a collection of independent gamma random variables, independent of the $N_j$, where $S_{ji}$ is Gamma$(j,1)$ (the sum of $j$ independent standard exponentials). We define:
\begin{align}
\label{def_S_n_T_n}
S_n=\sum_{j=1}^n\sum_{i=1}^{N_j}S_{ji},\qquad T_n=\sum_{j=1}^njN_j.
\end{align}
and
\begin{align*}
\bm S^{(n)}=\left(S_{11},\dots,S_{1,N_1},\dots,S_{n1},\dots,S_{n,N_n}\right),\qquad\qquad {\bm N}^{(n)}=\left(N_1,\dots,N_n\right).
\end{align*}
Note that there are $N_1$ induced spacings $\xi_{ni}$ (intervals of constancy of $\hat f_n$) of length $1$, $N_2$ induced spacings $\xi_{ni}$  (intervals of constancy of $\hat f_n$), consisting of two consecutive original spacings between locations of jumps of the least concave majorant, etc., where $N_j$ can be zero.

We now have the following representation theorem:

\begin{theorem}[Theorem 2.1 in \cite{piet_ron:83}]
\begin{align*}
\left(nD_{n1},\dots,nD_{n,m+1};Q_{n1},\dots,Q_{n,m}\right)\stackrel{{\cal D}}=\left({\bm S}^{(n)},{\bm N}^{(n)}\bigm|S_n=n,T_n=n\right)
\end{align*}
\end{theorem}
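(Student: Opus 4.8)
The plan is to derive the representation from the well-known description of the Grenander estimator as the slope of the least concave majorant (LCM) of the empirical distribution function, combined with the classical fact that the order statistics of a uniform sample have the same joint distribution as normalized partial sums of i.i.d.\ standard exponentials. First I would recall that for a uniform sample $X_1,\dots,X_n$, if $E_1,\dots,E_{n+1}$ are i.i.d.\ standard exponential with partial sums $\Gamma_k=E_1+\dots+E_k$ and $\Gamma=\Gamma_{n+1}$, then the vector of normalized spacings $(E_1/\Gamma,\dots,E_{n+1}/\Gamma)$ is distributed as the uniform spacings; equivalently, conditionally on $\Gamma_{n+1}=n$ (suitably interpreted via a limiting/density argument), $(\Gamma_1,\dots,\Gamma_n)$ has the law of the ordered sample scaled by $n$. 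Thus the kinks of the LCM and the induced spacings $D_{ni}$ can be read off from the concave majorant of the partial-sum process $(k,\Gamma_k)_{k=0}^{n+1}$.

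The combinatorial heart of the argument is to understand which sets of ``blocks'' of consecutive exponentials can arise as the pieces of constancy of the Grenander estimator. An interval of constancy of $\hat f_n$ spanning $j$ original inter-jump spacings and containing a certain number of sample points corresponds, in the partial-sum picture, to a vertex of the concave majorant connecting two touch points $j$ indices apart; the ``height gained'' over that block is the sum of $j$ of the exponentials, i.e.\ a Gamma$(j,1)$ variable, and $J_{ni}=j$ records its length. So one wants: the number of blocks of ``combinatorial length'' $j$ is $Q_{nj}$, the total number of blocks is $m+1$, and the rescaled block-lengths $nD_{ni}$ are the corresponding Gamma sums. The key distributional lemma I would prove (or cite from the spacings/LCM literature, e.g.\ the Sparre Andersen circle of ideas underlying \cite{GroLo:93}) is that the block structure of the concave majorant of a partial-sum walk with i.i.d.\ exchangeable increments is governed, after the exponential representation, by independent Poisson$(1/j)$ counts $N_j$ of blocks of each length $j$ — this is the Poisson analogue of the cycle structure of a random permutation, which appears because the number of uniform spacings grouped into a length-$j$ block behaves like the number of $j$-cycles of a random permutation on $n$ points. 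Given the $N_j$, the block-sums are independent Gamma$(j,1)$'s, one for each of the $N_j$ blocks of length $j$.

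Concretely, I would carry out the following steps. (1) Replace the uniform order statistics by $\Gamma_k/\Gamma_{n+1}$ and pass to the law conditioned on $\Gamma_{n+1}=n$, so that the Grenander estimator's pieces become determined by the concave majorant of $(k,\Gamma_k)$, $0\le k\le n$, with the pinning $\Gamma_n=n$ (the conditioning event $S_n=n$, $T_n=n$ will encode exactly these two constraints). (2) Describe the faces of this concave majorant by their horizontal extents $j$ (number of indices) and vertical extents (the Gamma$(j,1)$ block-sums), obtaining $J_{ni}$, $Q_{nj}$, $nD_{ni}$. (3) Prove the combinatorial fact that, unconditionally, the collection of face-lengths of the concave majorant has the law of $\{N_j\}$ with $N_j\sim$ Poisson$(1/j)$ independent, and that conditionally on the face-lengths the block-sums are independent Gammas — this identifies the unconditional joint law of $({\bm S}^{(n)},{\bm N}^{(n)})$ with the geometry of the unpinned walk. (4) Impose the two linear constraints: $\sum_j jN_j=n$ (total number of indices, i.e.\ $T_n=n$) and $\sum_{j,i} S_{ji}=n$ (total height $\Gamma_n=n$, i.e.\ $S_n=n$), and check that conditioning the walk on $\Gamma_n=n$ with $n$ steps corresponds exactly to conditioning $({\bm S}^{(n)},{\bm N}^{(n)})$ on $\{S_n=n,T_n=n\}$. (5) Conclude equality in distribution of the two tuples.

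The main obstacle, I expect, is step (3): making precise and proving that the face-lengths of the concave majorant of the partial-sum walk are independent Poisson$(1/j)$ variables. This is the place where the Sparre Andersen / cycle-lemma combinatorics enters, and it must be done carefully — the cleanest route is via the representation used in \cite{GroLo:93} (relating vertices of the LCM of the empirical process to records/cycles), or via a direct generating-function computation using Cauchy's formula showing that $\E z^{T_n}\prod_j u_j^{N_j}$ factorizes in the required way. A secondary, more technical obstacle is the rigorous handling of the conditioning on the measure-zero events $\{S_n=n\}$ and $\{\Gamma_{n+1}=n\}$: one must argue via joint densities (the $D_{ni}$ have a joint density, and $(S_n,T_n)$ has a density in its continuous coordinate), so that the conditional laws are well defined and the identification in step (4) is legitimate. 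Once these two points are in place, the remaining steps are bookkeeping.
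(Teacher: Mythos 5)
First, note that the paper you are working from does not prove this statement at all: it is quoted verbatim as Theorem 2.1 of \cite{piet_ron:83} and used as a black box, so the only comparison possible is with the original proof there, which proceeds by computing the joint density of the faces of the concave majorant of the partial-sum (exponential) representation of the uniform order statistics and matching it with the conditional density of $({\bm S}^{(n)},{\bm N}^{(n)})$ given $\{S_n=n,T_n=n\}$. Your overall strategy is the right one, but your step (3) contains a false claim that breaks the logic of steps (3)--(5). The face-lengths of the concave majorant of an $n$-step walk partition the $n$ steps, so the counts $Q_{nj}$ satisfy $\sum_j jQ_{nj}=n$ \emph{deterministically}; they therefore cannot be unconditionally distributed as independent Poisson$(1/j)$ variables. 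The correct combinatorial fact (the Sparre Andersen / cycle-lemma input) is that the face-length counts are distributed as the cycle counts of a uniform random permutation of $n$ elements, which coincide with $(N_1,\dots,N_n)$ \emph{conditioned on} $T_n=n$. Consequently there is no ``unconditional identification'' to be made first with the constraint imposed afterwards: the conditioning on $T_n=n$ is forced from the outset, and only the continuous constraint $S_n=n$ (the pinning $\Gamma_n=n$) can be imposed in a second step via a density argument.

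A second gap is that you treat the block-sums, given the face-lengths, as unconstrained independent Gamma$(j,1)$ variables. The faces of a concave majorant must have strictly decreasing slopes $J_{ni}/(nD_{ni})$, so the block-sums of the actual majorant satisfy an ordering constraint that independent Gammas do not. The resolution, which your sketch omits, is an exchangeability computation: for a fixed partition of $\{1,\dots,n\}$ into consecutive blocks of lengths $j_1,\dots,j_k$, the probability that these blocks are exactly the faces factorizes as $\prod_i (1/j_i)$ (cycle lemma applied within each block, independently of the block-sums) times the probability that the resulting slopes decrease; summing over the $k!/\prod_j Q_{nj}!$ admissible arrangements of the blocks removes the ordering constraint by exchangeability, and this is precisely why the right-hand side of the theorem must be read with the ordering convention mentioned in the paper's remark following the statement. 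Without this step the two sides do not match as stated. Your flagged concerns about conditioning on measure-zero events and the $n$ versus $n+1$ spacings (the ``zero-step spacing'' of \cite{piet_ron:83}) are real but secondary; the two points above are where the argument actually needs repair.
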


\begin{remark}
{\rm
For specificity, the random variables $S_{ji},\,i=1,\dots,N_j$, and $D_{ji},\,i=1,\dots Q_{nj}$, are ordered in Theorem 2.1 in \cite{piet_ron:83}. There is also a zero-step spacing introduced in \cite{piet_ron:83}, but this does not seem to be necessary.
}
\end{remark}

Using this representation, we can reduce the proofs of the limit behavior of global functionals of the Grenander estimator to a theorem for gamma and Poisson random variables, under the condition $(T_n,S_n)=(n,n)$. For convenience in later proofs, we also use a further standardization of $(S_n,T_n)$:
\begin{align}
\label{def_V_n_W_n}
V_n=n^{-1/2}\biggl\{S_n-\sum_{j=1}^n jN_j\biggr\}=n^{-1/2}\left\{S_n-T_n\right\},\qquad W_n=\frac{T_n}{n},
\end{align}
where $S_n$ and $T_n$ are defined by (\ref{def_S_n_T_n}). A conditional central limit theorem for functionals of the Grenander estimator can then be proved under the condition:
\begin{align}
\label{V_n-W_n-conditions}
V_n=0,\qquad W_n=1.
\end{align}

The infinitely divisible limit distribution of the pair $(V_n,W_n)$ was given in Lemma 3.1 of \cite{piet_ron:83}, but unfortunately Lemma 3.1 of \cite{piet_ron:83} contains a rather silly error (the $u^2$ in the representation of the characteristic function should be $u$). The correct version of this result is given in Lemma \ref{lemma_V_W} below, where also the origin of the error is explained.
The proof in \cite{GroLo:93} does not use the result on the limit distribution of $(V_n,W_n)$, so is not influenced by the erroneous Lemma 3.1 in \cite{piet_ron:83}.

We use methods different from those in \cite{piet_ron:83}. The conditional central limit theorem was proved in \cite{piet_ron:83} using Le Cam's paper \cite{lecam:58} (a paper apparently published without his permission, as became clear in a conversation of the author with him). In the present context, where we clearly have to deal with non-standard asymptotics, \cite{lecam:58} does not seem to be the right tool to use. We replace this by a direct analysis of the characteristic function.
The crucial tools here are Cauchy's formula and saddle point methods, using contour integration in the complex plane. To illustrate our method in a simple setting, we give a shortened version of the proof in \cite{GroLo:93} of Sparre Andersen's result \cite{sparre:54} in Section \ref{sec:Sparre_Andersen}.

\section{Sparre Andersen's result}
\label{sec:Sparre_Andersen}
To illustrate our method in the simplest setting, we give a short version of the proof in \cite{GroLo:93} of the following remarkable result of Sparre Andersen in \cite{sparre:54}.

\begin{theorem}[Sparre Andersen's result]
\label{th:Sparre_Andersen}
\end{theorem}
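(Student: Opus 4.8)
The statement to be proved is Sparre Andersen's result on the number of jumps (kinks) of the Grenander estimator for a uniform sample — by the discussion in Section~\ref{sec:intro} this number is the quantity $m$, which equals $\sum_{j=1}^n N_j$ under the conditioning $(S_n,T_n)=(n,n)$ appearing in the representation theorem. So the target is a central limit theorem of the form
\begin{align*}
\frac{m-\log n}{\sqrt{\log n}}\stackrel{{\cal D}}\longrightarrow N(0,1),
\end{align*}
(the centering and scaling being $\log n$ because $\E\sum_{j=1}^n N_j=\sum_{j=1}^n 1/j\sim\log n$ and, the $N_j$ being independent Poisson, the variance is the same sum). The plan is to prove this via the conditional representation rather than combinatorially.

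**The approach.** First I would write $M_n:=\sum_{j=1}^n N_j$ and observe that, unconditionally, $(V_n,W_n,(M_n-\log n)/\sqrt{\log n})$ is built out of the independent Poisson variables $N_j$ and the gamma variables $S_{ji}$; its joint characteristic function factorizes over $j$ and can be written down in closed form as a product $\prod_{j=1}^n\exp\{j^{-1}(\text{something})\}$. The key step is then to extract the \emph{conditional} law of $(M_n-\log n)/\sqrt{\log n}$ given $\{V_n=0,W_n=1\}$ via Cauchy's formula: one writes the conditional characteristic function as a ratio of two contour integrals (numerator with the extra factor $e^{iu(M_n-\log n)/\sqrt{\log n}}$, denominator without), each integral being over contours in the complex plane that encircle the conditioning values, i.e. an inversion integral for the constraint $T_n=n$ (a circle in one complex variable) together with an inversion for $S_n=n$ (a line or circle in a second complex variable). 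Then I would push through a saddle point / Laplace analysis of both integrals: locate the saddle point governing the $\prod_j \exp\{j^{-1}(\cdot)\}\sim \exp\{(\log n)(\cdot)\}$ behavior, check that the extra factor $e^{iu(M_n-\log n)/\sqrt{\log n}}$ perturbs the saddle only at lower order, and read off that the ratio converges to $e^{-u^2/2}$ locally uniformly in $u$. Finally, Lévy's continuity theorem gives the convergence in distribution.

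**Key steps in order.**
\begin{enumerate}
\item[(1)] Express $m$ (number of kinks) as $M_n=\sum_{j=1}^n N_j$ under the conditioning, using the representation theorem and the remark that there are $N_j$ induced spacings of ``length $j$''.
\item[(2)] Write the joint characteristic function of $(M_n,T_n,S_n)$ (or of $(M_n,V_n,W_n)$) in product form, exploiting independence across $j$ and the explicit Poisson / gamma transforms.
\item[(3)] Use Cauchy's formula / Fourier inversion to represent the conditional characteristic function $\E[e^{iuM_n^*}\mid V_n=0,W_n=1]$, with $M_n^*=(M_n-\log n)/\sqrt{\log n}$, as a ratio of contour integrals.
\item[(4)] Do the saddle point analysis: identify the saddle, justify moving the contour through it, Taylor-expand the exponent to second order, and evaluate both numerator and denominator asymptotically.
\item[(5)] Show the ratio $\to e^{-u^2/2}$ and conclude by Lévy continuity.
\end{enumerate}

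**Main obstacle.** The delicate point is step~(4): controlling the two-dimensional contour integral (the joint inversion over the $S_n=n$ and $T_n=n$ constraints) uniformly enough that the perturbation by $e^{iuM_n^*}$ is negligible, and in particular verifying that the saddle point stays in the region of analyticity, that the contribution away from the saddle is exponentially smaller, and that the $n\to\infty$ limit of the product $\prod_{j\le n}$ behaves like $\exp\{(\log n)\,\psi(\cdot)\}$ with the right $\psi$. This is exactly where the ``non-standard asymptotics'' (the $\log n$ rather than $n$ scale) enters, and it is presumably the reason \cite{lecam:58} is not the right tool; the care here is also what pinpointed the ``$u^2$ should be $u$'' error in \cite{piet_ron:83}. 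I would expect to need tail bounds on $|h|$-type quantities (here just on the elementary transforms, so Condition~\ref{condition_h} is not needed for this warm-up) to kill the contribution of the contour outside a shrinking neighbourhood of the saddle.
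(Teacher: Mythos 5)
Your plan is viable in outline, but it is not the route the paper takes, and it misses the two observations that make the paper's proof of Theorem \ref{th:Sparre_Andersen} short. First, the paper conditions only on $T_n=n$, not on $(S_n,T_n)=(n,n)$: given $\bm N^{(n)}=\bm k$ with $\sum_j jk_j=n$, the variable $S_n$ is Gamma$(n,1)$ whatever $\bm k$ is, so the conditional density of $S_n$ at $n$ given $\{T_n=n,\bm N^{(n)}=\bm k\}$ does not depend on $\bm k$ and the extra conditioning on $S_n=n$ leaves the law of $\bm N^{(n)}$ unchanged. This kills the second inversion variable in your step (3) before it appears, so there is no two-dimensional contour integral to control. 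Second, your step (4) is unnecessary: after the one-dimensional lattice inversion over $u\in[-\pi,\pi]$, Cauchy's formula is used as an \emph{exact} identity, not asymptotically --- the coefficient of $z^n$ in $\exp\{\delta_n\sum_{j=1}^n z^j/j\}$ coincides with that in $(1-z)^{-\delta_n}$, namely $(-1)^n\binom{-\delta_n}{n}=\prod_{j=1}^n\bigl(1+(\delta_n-1)/j\bigr)$, and the conditional characteristic function is then evaluated in closed form, with only the elementary expansion of $\sum_j\log\bigl(1+(\delta_n-1)/j\bigr)$ left to do. No saddle point, no contour deformation, no tail estimates are needed; the paper explicitly presents this section as the illustration of the method \emph{without} the saddle-point complications, which are reserved for Section \ref{sec:bodhi}. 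What you propose is essentially the machinery of Lemmas \ref{lemma:conditional_dens}--\ref{lemma:region_A_2} transplanted to this warm-up; it would presumably go through (it does for the harder Theorem \ref{th:main_theorem}), but your own ``main obstacle'' --- uniform control of the joint inversion --- is precisely the work that the paper's reduction avoids, and your proposal leaves it unresolved.
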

Let $X_1,\dots,X_n$ be a sample from the Uniform$(0,1)$ distribution and let $N_{jumps}$ be the number of jumps of the Grenander estimator for this sample. Then
\begin{align*}
\frac{N_{jumps}-\log n}{\sqrt{\log n}}\stackrel{{\cal D}}\longrightarrow N(0,1),
\end{align*}
where $N(0,1)$ is the standard normal distribution.

\begin{proof}
Let, for the sample $X_1,\dots,X_n$, $U_n$ be defined by:
\begin{align*}
U_n=\frac{\sum_{j=1}^n N_j-\log n}{\sqrt{\log n}}\,,
\end{align*}
and let $T_n$ be defined as in (\ref{def_S_n_T_n}).
Using (part of) the representation, introduced in Section \ref{sec:intro}, we only have to prove that $(U_n|T_n=n)$ tends in law to a standard normal distribution.
To this end we consider the conditional characteristic function
\begin{align*}
\E\left\{e^{isU_n}\bigm|T_n=n\right\}.
\end{align*}
Lemma 3.2 of \cite{piet_ron:83} implies:
\begin{align*}
\P\left\{T_n=n\right\}=\exp\biggl\{-\sum_{j=1}^n\frac1j\biggr\}.
\end{align*}
Hence we get, by Fourier inversion and using the notation $b_n=\sqrt{\log n}$,
\begin{align*}
&\E\left\{e^{isU_n}\bigm|T_n=n\right\}=\exp\Biggl\{\sum_{j=1}^n\frac1j\Biggr\}\frac1{2\pi}\int_{u=-\pi}^{\pi}\E\,e^{isU_n+iu T_n-inu}\,du\\
&=\exp\Biggl\{\sum_{j=1}^n\frac1j\Biggr\}\frac1{2\pi}\int_{u=-\pi}^{\pi}\exp\Biggl\{-isb_n+\sum_{j=1}^n\frac1j\left(e^{iju+is/b_n}-1\right)-inu\Biggr\}\,du\\
&=e^{-isb_n}\frac1{2\pi}\int_{u=-\pi}^{\pi}\exp\Biggl\{e^{is/b_n}\sum_{j=1}^n\frac1j e^{iju}-inu\Biggr\}\,du.
\end{align*}
Denoting the contour $u\mapsto e^{iu},\,u\in[-\pi,\pi),$ by $C$, we write this in the form
\begin{align}
\label{Cauchy1}
e^{-isb_n}\frac1{2\pi i}\int_{C}\exp\left\{\d_n\sum_{j=1}^n\frac{z^j}{j}\right\}z^{-n-1}\,dz,
\end{align}
where $\d_n$ is given by:
\begin{align*}
\d_n=e^{is/b_n}\,.
\end{align*}
The expression in (\ref{Cauchy1}), multiplied by $e^{isb_n}$, is by Cauchy's formula equal to the coefficient of $z^n$ in the power series around $z=0$ of the function:
\begin{align*}
z\mapsto \exp\left\{\d_n\sum_{j=1}^n\frac{z^j}{j}\right\}.
\end{align*}
Comparing this with the power series of the function $z\mapsto (1-z)^{-\d_n}$, we see that the coefficient of $z^n$ is the same in both series. This coefficient is:
\begin{align*}
(-1)^n{-\d_n\choose n}=\prod\left(1+\frac{\d_n-1}{j}\right)=\exp\Biggl\{\sum_{j=1}^n\log\left(1+\frac{\d_n-1}{j}\right)\Biggr\}.
\end{align*}
So we get:
\begin{align*}
&\E\left\{e^{isU_n}\bigm|T_n=n\right\}=\exp\Biggl\{-isb_n+\sum_{j=1}^n\log\left(1+\frac{\d_n-1}{j}\right)\Biggr\}\\
&=\exp\Biggl\{-isb_n+\sum_{j=1}^n\log\left(1+\frac{e^{is/b_n}-1}{j}\right)\Biggr\}\\
&=\exp\Biggl\{-isb_n+\sum_{j=1}^n\frac{e^{is/b_n}-1}{j}+o(1)\Biggr\}=\exp\Biggl\{-\frac{s^2}{2b_n^2}\sum_{j=1}^n\frac1j+o(1)\Biggr\}\\
&=\exp\left\{-\tfrac12s^2+o(1)\right\}.
\end{align*}
\end{proof}

\section{The limit distribution of the conditioning variables $(V_n,W_n)$}
\label{sec:limit_V_W}
Let the pair $(V_n,W_n)$ be defined by (\ref{def_V_n_W_n}). We prove the following lemma, which corrects Lemma 3.1 in \cite{piet_ron:83}.

\begin{lemma}
\label{lemma_V_W}
The pair $(V_n,W_n)$ converges in distribution to $(V,W)$, where $(V,W)$ has the infinitely divisible characteristic function
\begin{align*}
\f_{(V,W)}(t,u)=\exp\left\{\int_0^1\frac{e^{-\left(\tfrac12t^2-iu\right)y}-1}{y}\,dy\right\}.
\end{align*}
\end{lemma}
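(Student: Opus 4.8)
The plan is to compute the joint characteristic function of $(V_n,W_n)$ directly from the definitions in~(\ref{def_S_n_T_n}) and~(\ref{def_V_n_W_n}) and pass to the limit. Recall $V_n = n^{-1/2}(S_n - T_n)$ and $W_n = T_n/n$, where $S_n = \sum_{j=1}^n\sum_{i=1}^{N_j}S_{ji}$ and $T_n = \sum_{j=1}^n jN_j$, with $N_j\sim\text{Poisson}(1/j)$ independent and $S_{ji}\sim\text{Gamma}(j,1)$ independent of everything. So I would start by writing
\begin{align*}
\E\,e^{itV_n + iuW_n} = \E\exp\left\{\frac{it}{\sqrt n}\sum_{j=1}^n\sum_{i=1}^{N_j}(S_{ji}-j) + \frac{iu}{n}\sum_{j=1}^n jN_j\right\}.
\end{align*}
Conditioning first on $\bm N^{(n)}$, the $S_{ji}$ are independent, and $\E\,e^{(it/\sqrt n)(S_{ji}-j)} = e^{-ijt/\sqrt n}(1 - it/\sqrt n)^{-j}$. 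Hence the conditional expectation factors as a product over $j$ of $\bigl[e^{-ijt/\sqrt n}(1-it/\sqrt n)^{-j}\,e^{iju/n}\bigr]^{N_j}$, and taking expectation over the independent Poisson variables gives
\begin{align*}
\E\,e^{itV_n+iuW_n} = \exp\left\{\sum_{j=1}^n\frac1j\left(e^{-ijt/\sqrt n}(1-it/\sqrt n)^{-j}e^{iju/n} - 1\right)\right\}.
\end{align*}

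The next step is an asymptotic expansion of the $j$-th summand. Using $-\log(1-it/\sqrt n) = it/\sqrt n + \tfrac12(it/\sqrt n)^2 + O(n^{-3/2})$, the exponent $-ijt/\sqrt n + iju/n + j\bigl(it/\sqrt n - \tfrac12 t^2/n + O(n^{-3/2})\bigr)$ collapses — the two $\pm ijt/\sqrt n$ terms cancel — to $-\tfrac12 (t^2/n) j + iuj/n + O(j n^{-3/2})$, which is exactly $-(\tfrac12 t^2 - iu)(j/n) + O(jn^{-3/2})$. This is where the $u$ (not $u^2$) promised in the lemma's remark comes out: the Poisson/gamma combination produces $iu\cdot j/n$ linearly, and the factor-of-$j$ cancellation in the real part is what leaves $\tfrac12 t^2$ rather than something with an extra power. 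So, writing $c = \tfrac12 t^2 - iu$, the characteristic function is $\exp\bigl\{\sum_{j=1}^n \tfrac1j(e^{-cj/n}-1) + (\text{error})\bigr\}$. The error terms sum to $O(n^{-3/2}\sum_{j=1}^n 1) = O(n^{-1/2}) \to 0$ after one checks uniform control of the remainder — the only mild subtlety is that $(1-it/\sqrt n)^{-j}$ is being raised to a power $j$ that can be as large as $n$, so the expansion of $-j\log(1-it/\sqrt n)$ must be carried to enough orders with an $O(jn^{-3/2})$ (not just $O(n^{-1/2})$) remainder, which is fine since $\sum_j jn^{-3/2} = O(n^{-1/2})$.

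Finally, $\sum_{j=1}^n \tfrac1j(e^{-cj/n}-1)$ is a Riemann sum: with $y = j/n$ and spacing $1/n$, $\tfrac1j(e^{-cj/n}-1) = \tfrac1n\cdot\tfrac{e^{-cy}-1}{y}$, so the sum converges to $\int_0^1 \tfrac{e^{-cy}-1}{y}\,dy = \int_0^1\tfrac{e^{-(\tfrac12 t^2 - iu)y}-1}{y}\,dy$, giving exactly the claimed limit $\f_{(V,W)}(t,u)$. Here one should note that $(e^{-cy}-1)/y$ extends continuously to $-c$ at $y=0$, so the integral and the Riemann-sum convergence are unproblematic, and $\text{Re}(c)=\tfrac12 t^2\ge 0$ keeps the integrand bounded. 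Convergence of the characteristic functions then yields convergence in distribution by Lévy's continuity theorem, and the limit is infinitely divisible since it is the exponential of a Lévy–Khintchine-type integral. The main obstacle, such as it is, is purely bookkeeping: tracking the remainder in the expansion of $e^{-ijt/\sqrt n}(1-it/\sqrt n)^{-j}$ uniformly in $j\le n$ so that the accumulated error genuinely vanishes, and being careful that it is the first-order term $e^{iju/n}$ — not a second-order term — that survives in the $u$-direction, which is precisely the point at which the erroneous Lemma~3.1 of~\cite{piet_ron:83} went wrong.
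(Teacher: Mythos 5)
Your proposal is correct and follows essentially the same route as the paper: both derive the Poissonized characteristic function $\exp\{\sum_{j=1}^n j^{-1}(e^{-ijt/\sqrt n}(1-it/\sqrt n)^{-j}e^{iju/n}-1)\}$ by conditioning on the $N_j$, expand the $j$-th summand to identify $-(\tfrac12 t^2-iu)j/n$ plus a uniformly summable remainder, and pass to the limit via a Riemann sum. Your bookkeeping of the $O(jn^{-3/2})$ remainder is in fact slightly cleaner than the paper's expansion, and your brief appeal to the L\'evy--Khintchine form for infinite divisibility matches the paper, which also defers that point to an explicit construction of the L\'evy measure after the proof.
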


\begin{proof}
We have:
\begin{align}
\label{Poissonized_charfu}
\E\exp\left\{itV_n+iuW_n\right\}=&\exp\left\{\sum_{j=1}^n\frac{e^{i uj/n}\f_{S_{j1}-j}(tn^{-1/2})-1}{j}\right\}\nonumber\\
&=\exp\left\{\sum_{j=1}^n\frac{e^{i j (u/n-tn^{-1/2})}\left(1-itn^{-1/2}\right)^{-j}-1}{j}\right\},
\end{align}
where $\f_{S_{j1}-j}$ is the characteristic function of the centered gamma variable $S_{j1}-j$, see (3.9) of \cite{piet_ron:83}.

Writing $y_{j,n}=j/n$, and noting that for $y\in(0,1)$:
\begin{align*}
\frac{e^{i ny(u/n-tn^{-1/2})}\left(1-itn^{-1/2}\right)^{-ny}-1}{y}=\frac{e^{iyu-\tfrac12t^2y}-1}{y}-\tfrac13i e^{-\tfrac12t^2 y + i u y} t^3n^{-1/2}+O\left(n^{-1}\right),
\end{align*}
and that the limit of the expression on the left for $y\downarrow0$ is equal to:
\begin{align*}
i(-t\sqrt{n} + u) - n \log(1 - it/\sqrt{n})=iu-\tfrac12t^2+O\left(n^{-1/2}\right),\qquad n\to\infty,
\end{align*}
we can write the exponent in the form
\begin{align*}
\sum_{j=1}^n\frac{e^{-\tfrac12t^2y_{j,n}+iuy_{j,n}}-1}{ny_{j,n}}+O\left(n^{-1/2}\right),
\end{align*}
(it is here that the mistake was made in \cite{piet_ron:83}, in the formula after (3.9) on p.\ 333), so we get a Riemann sum converging to the integral
\begin{align*}
\int_0^1\frac{e^{-\tfrac12t^2y+iuy}-1}{y}\,dy.
\end{align*}
The infinite divisibility of the limit distribution is shown below.
\end{proof}

\begin{remark}
\label{remark:error_piet_ron}
{\rm In \cite{piet_ron:83} first the limit distribution of $U_n$ is computed, using moment conditions (going up to the $8$th moments). Next the limit distribution of $(V_n,W_n)$ is computed and it is stated that this distribution is infinite divisible and has no normal component, implying that therefore the limit  $(V_n,W_n)$ has to be independent of the limit of $U_n$.

The $s^2u^2$ in the exponent of the characteristic function of the limit distribution of $(V_n,W_n)$ in Lemma 3.1 of \cite{piet_ron:83} should be $s^2u$. The incorrect $u^2$ arose on p.\ 333 of \cite{piet_ron:83}, where the limit of the characteristic function of the rescaled gamma random variable $(S_j-j)/\sqrt{n}$ was given by $\exp\{-s^2u^2/2\}$ instead of $\exp\{-s^2u/2\}$. This also invalidates the ensuing remarks on p.\ 333 of \cite{piet_ron:83}. We correct these remarks below.
}
\end{remark}

The distribution of $(V,W)$ is infinitely divisible, as we now show. A general characterization of infinitely divisible distributions in $\R^d$ is given in \cite{Sato:01} and given below for convenience.

\begin{theorem}[Theorem 1.3 in \cite{Sato:01}, L\'evy-Khintchine representation]
If the distribution $\m$ is infinitely divisible, then its characteristic function $\hat\m(\bms)=\int_{\R^d}\exp\{i\langle \bms,\bmz\rangle\}\,d\m(\bmz)$ is given by:
\begin{align}
\label{levy-khintchine_rep}
\hat\m(z)=\exp\left\{-\tfrac12\bms^T \bm A\bms+\int_{R^d}\left(e^{i\langle \bms,\bmz\rangle}-1-i\langle \bms,\bmz\rangle1_{\{\|\bmz\|\le1\}}(\bmz)\right)\,d\n(\bmz)+i\langle\bm\delta,\bms\rangle\right\},
\end{align}
where $\bm A$ is a symmetric nonnegative-definite $d\times d$ matrix, $\|\cdot\|$ is the Euclidean norm, $\n$ is a measure on $\R^d$ satisfying $\n(\{0\})=0$, $\int_{\R^d}\left(\|\bmz\|^2\wedge1\right)\,d\n(\bmz)<\infty$, and where $\bm\delta\in\R^d$. The representation (\ref{levy-khintchine_rep}) by $\bm A,\n$ and $\bm\delta$ is unique. Conversely, for any choice of $\bm A,\n$ and $\bm\delta$ satisfying the conditions above, there exists an infinite divisible distribution $\m$ having characteristic  function (\ref{levy-khintchine_rep}).
\end{theorem}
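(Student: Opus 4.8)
This is a classical result quoted here for convenience, so I only outline the standard argument (the route of \cite{Sato:01}). Suppose $\m$ is infinitely divisible, so that for every $n$ there is a probability measure $\m_n$ with $\hat\m_n^{\,n}=\hat\m$. First I would show $\hat\m$ never vanishes: since $|\hat\m_n|\le1$ and $|\hat\m_n|^n=|\hat\m|$, on a neighbourhood of $0$ where $|\hat\m|\ge c>0$ one has $1\ge|\hat\m_n|\ge c^{1/n}\to1$, whence $\hat\m_n\to1$ locally uniformly and $\hat\m=\lim_n\hat\m_n^{\,n}$ has no zeros. Hence $\hat\m$ has a unique continuous logarithm $\psi$ with $\psi(0)=0$, and each $\hat\m_n$ must equal the continuous $n$-th root $e^{\psi/n}$ ($\R^d$ being connected, two non-vanishing continuous $n$-th roots of $\hat\m$ that agree at $0$ coincide). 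Consequently $n(\hat\m_n-1)=n(e^{\psi/n}-1)\to\psi$, and
\[
\psi(\bms)=\lim_{n\to\infty}\int_{\R^d}\bigl(e^{i\langle\bms,\bmz\rangle}-1\bigr)\,n\,\m_n(d\bmz),
\]
locally uniformly in $\bms$.

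The core is to pass to the limit inside this integral. Next I would introduce the finite measures $\rho_n(d\bmz)=\|\bmz\|^2(1+\|\bmz\|^2)^{-1}\,n\,\m_n(d\bmz)$: a routine estimate — comparing the integral of $1-\mathrm{Re}\,\hat\m_n$ over a small ball of $\bms$ with the $\|\bmz\|$-truncated second moment and the tail mass of $\m_n$ — together with $n(1-\mathrm{Re}\,\hat\m_n)\to-\mathrm{Re}\,\psi$ shows $\sup_n\rho_n(\R^d)<\infty$ and that $\{\rho_n\}$ is tight. Pass to a subsequence along which the (bounded, tight) matrix-valued measures $\bmz\bmz^T\|\bmz\|^{-2}\rho_n(d\bmz)$ converge weakly to some $M$, and set $\rho=\mathrm{tr}\,M$. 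Using the compensator $i\langle\bms,\bmz\rangle/(1+\|\bmz\|^2)$, the function $\phi_\bms(\bmz)=(1+\|\bmz\|^2)\|\bmz\|^{-2}\bigl(e^{i\langle\bms,\bmz\rangle}-1-i\langle\bms,\bmz\rangle/(1+\|\bmz\|^2)\bigr)$ is bounded on $\R^d$ and continuous off $0$; integrating it against $\rho_n$ and splitting at $\|\bmz\|=\varepsilon$, the far part converges to the L\'evy integral against $\n(d\bmz):=(1+\|\bmz\|^2)\|\bmz\|^{-2}\rho|_{\R^d\setminus\{0\}}(d\bmz)$, which satisfies $\n(\{0\})=0$ and $\int(\|\bmz\|^2\wedge1)\,d\n<\infty$, while the near part tends, as $n\to\infty$ and then $\varepsilon\downarrow0$, to $-\tfrac12\bms^T\bmA\bms$ with $\bmA:=M(\{0\})$ symmetric nonnegative definite. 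Switching from this compensator to $i\langle\bms,\bmz\rangle1_{\{\|\bmz\|\le1\}}$, as in (\ref{levy-khintchine_rep}), absorbs the remaining linear terms into a single drift $i\langle\bm\delta,\bms\rangle$; assembling the three pieces gives (\ref{levy-khintchine_rep}).

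For uniqueness one checks that $\bmA,\n,\bm\delta$ are determined by $\psi$ alone, so that the limiting data are independent of the chosen subsequence and the whole sequence converges. The standard device is a mollification: for a direction $\bm e$ and small $\theta>0$, the symmetric second difference $2\psi(\bms)-\psi(\bms+\theta\bm e)-\psi(\bms-\theta\bm e)$ annihilates the degree-$\le2$ part of $\psi$ along $\bm e$, and integrating it against a suitable kernel in $\bms$ recovers functionals of $\n$ of the form $\int(1-\cos\theta\langle\bm e,\bmz\rangle)\,d\n(\bmz)$; varying $\bm e$ and $\theta$ pins down $\n$, and subtracting the L\'evy integral from $\psi$ leaves a polynomial of degree $\le2$ whose coefficients recover $\bmA$ and $\bm\delta$. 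The converse is the easy direction: given $(\bmA,\n,\bm\delta)$, form the convolution of a Gaussian with covariance $\bmA$ and a suitable mean with the compound-Poisson laws built from $\n$ restricted to $\{\|\bmz\|>\varepsilon\}$, compensated by the appropriate drift, and let $\varepsilon\downarrow0$; each factor is infinitely divisible, infinite divisibility is preserved under weak limits, and a direct computation of characteristic functions gives exactly (\ref{levy-khintchine_rep}).

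I expect the Gaussian extraction to be the main obstacle: $\phi_\bms$ is genuinely discontinuous at $0$, and the mass that $\rho_n$ accumulates near the origin feeds quadratic behaviour of $\psi$ at $\bms=0$ which must be separated from the contribution $\int_{\|\bmz\|\le1}\langle\bms,\bmz\rangle^2\,d\n(\bmz)$ of the small jumps — this is exactly what the double limit on the truncated matrix-valued measures does, and isolating it cleanly (together with the mollification identities for uniqueness) is the delicate part. Everything else — zero-freeness, the distinguished logarithm, tightness of $\{\rho_n\}$, and the converse construction — is routine.
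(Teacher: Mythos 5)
This theorem is not proved in the paper at all: it is Theorem 1.3 of \cite{Sato:01}, quoted verbatim ``for convenience'' so that the L\'evy measure of the limit $(V,W)$ can be exhibited afterwards, so there is no proof of the paper's own to compare yours against. Your outline is the standard argument from exactly that source (zero-freeness and the distinguished logarithm, $n(\hat\m_n-1)\to\psi$, tightness of the Khintchine measures $\rho_n$, the double limit extracting $\bmA$ from the mass of the truncated matrix measures at the origin, second-difference mollification for uniqueness, and the Gaussian-plus-compensated-compound-Poisson construction for the converse), and it is sound as a sketch; the only compressed step is zero-freeness, where from $|\hat\m_n|\to1$ near the origin you still need the standard continuity-theorem step $\m_n\Rightarrow\delta_0$ to conclude $\hat\m_n\to1$ everywhere and hence that $\hat\m$ has no zeros anywhere. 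For the purposes of this paper nothing beyond the statement is needed, since the theorem is used only in its easy ``converse'' direction, to verify that the explicit triple $(\bmA,\n,\bm\delta)$ written down for $(V,W)$ yields an infinitely divisible law.
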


In the present situation we can take $\bmA$ the $2\times2$ matrix with zeroes, $\bm\delta=(0,0)^T$ and define $\n$ by the density
\begin{align*}
\frac{\partial^2\n(v,w)}{\partial v\partial w}=\f\left(v/\sqrt{w}\right)w^{-3/2}1_{(0,1)}(w),
\end{align*}
where $\f$ is the standard normal density. With these choices of $\bm A$, $\bm\delta$ and $\n$ we get, using the notation  $\bms=(t,u)^T$ and $\bmz=(v,w)^T$,
\begin{align*}
&\exp\left\{-\tfrac12\bms^T A\bms+\int_{R^2}\left(e^{i\langle \bms,\bmz\rangle}-1-i\langle \bms,\bmz\rangle1_{\{\|\bmz\|\le1\}}(\bmz)\right)\,d\n(\bmz)+i\langle\bm\delta,\bms\rangle\right\}\\
&=\exp\Biggl\{\int_{y=0}^1\frac{e^{-\tfrac12t^2y+iuy}-1}{y}\,dy\Biggr\}.
\end{align*}

We note that the computer package Mathematica evaluates the characteristic function for $(V,W)$ in the following way:
\begin{align}
\label{mathematica1}
\exp\Biggl\{\int_{y=0}^1\frac{e^{-\tfrac12t^2y+iuy}-1}{y}\,dy\Biggr\}
=\exp\left\{-\g-\Gamma(0,\tfrac12t^2-iu)-\log\left(\tfrac12t^2-iu\right)\right\},
\end{align}
where $\g$ is Euler's gamma and $\Gamma(0,\tfrac12t^2-iu)$ is the complementary incomplete gamma function, defined by:
\begin{align*}
\Gamma\left(0,\tfrac12t^2-iu\right)=\exp\left\{-\tfrac12t^2+iu\right\}\int_0^{\infty}\exp\left\{-\left(\tfrac12t^2-iu\right)x\right\}(1+x)^{-1}\,dx,
\qquad t\ne0.
\end{align*}
see, e.g., (2.01) on p.\ 109 of \cite{olver:74}.

\section{Central limit theorem for $\int h(\hat f_n(x))\,dx$}
\label{sec:bodhi}
In this section, we use the notation
\begin{align}
\label{b_n-c_n}
b_n=\sqrt{\tfrac34h''(1)^2\log n},\qquad c_n=\sqrt{n}.
\end{align}
Using the conditioning of Section \ref{sec:intro}, the statistic $\int h(\hat f_n(x))\,dx$ has the following representation:
\begin{align*}
\int h(\hat f_n(x))\,dx=n^{-1}\sum_{j=1}^n \sum_{i=1}^{N_j} h\left(\frac{j}{S_{ji}}\right)S_{ji},
\end{align*}
where  the Poisson random variables $N_j$ and the gamma random variables $S_{ji}$ are defined as in (\ref{def_S_n_T_n}), and where we condition on
$(V_n,W_n)=(0,1)$, where $(V_n,W_n)$ is defined by (\ref{def_V_n_W_n}). We define
\begin{align}
\label{def_U_n}
U_n=
\frac1{\sqrt{\tfrac34h''(1)^2\log n}}\Biggl\{\sum_{j=1}^n \sum_{i=1}^{N_j} \left[\left(h\left(\frac{j}{S_{ji}}\right)-h(1)\right)S_{ji}+h'(1)\left(S_{ji}-j\right)\right]-\tfrac12h''(1)\log n\Biggr\}.
\end{align}

\begin{remark}
\label{remark_effect_conditioning}
{\rm
The terms $h'(1)\left(S_{ji}-j\right)$ are present in $U_n$ as variance reducing terms and give, after the summation over $i$ and $j$, a zero contribution to $U_n$ if $(V_n,W_n)=(0,1)$. Also note that
\begin{align*}
h(1)\sum_{j=1}^n \sum_{i=1}^{N_j}S_{ji}=nh(1)=n\m(h,f),
\end{align*}
if the condition $(V_n,W_n)=(0,1)$ is satisfied.
}
\end{remark}

We assume that the function $h$ satisfies condition \ref{condition_h} and first consider the conditional density of $V_n$, given $W_n=1$.

\begin{lemma}
\label{lemma:conditional_dens}
The conditional density of $V_n$, given $W_n=1$, is the density of a centered and standardized Gamma$(n,1)$ variable:
\begin{align*}
f_{V_n|W_n=1}(x)=\Gamma(n)^{-1}n^{n-1/2}e^{-n-x\sqrt{n}}\left(1+x/\sqrt{n}\right)^{n-1},\,x\in\R.
\end{align*}
The density $f_{V_n|W_n=1}(x)$ converges uniformly to the standard normal density, as $n\to\infty$.

\end{lemma}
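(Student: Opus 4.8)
The plan is to compute the conditional density of $V_n$ given $W_n=1$ directly from the definitions, exploiting the fact that $W_n=1$ is the event $T_n=n$ and that conditioning on this event turns the mixed Poisson-gamma sum into a clean gamma variable. Recall $S_n=\sum_{j=1}^n\sum_{i=1}^{N_j}S_{ji}$ and $T_n=\sum_{j=1}^n jN_j$, with $V_n=n^{-1/2}(S_n-T_n)$ and $W_n=T_n/n$. On the event $\{T_n=n\}$ we have $S_n-T_n=S_n-n$, so $V_n=n^{-1/2}(S_n-n)$, and it suffices to find the conditional law of $S_n$ given $T_n=n$. Now, given the vector $\bm N^{(n)}=(N_1,\dots,N_n)$, the variable $S_n$ is a sum of $\sum_j N_j$ independent standard exponentials (each $S_{ji}$ contributes $j$ of them, but the individual exponentials are i.i.d.\ standard), so $S_n\mid\bm N^{(n)}$ is Gamma$\bigl(\sum_j jN_j,\,1\bigr)$ --- wait, more precisely $S_{ji}$ is Gamma$(j,1)$, so given $\bm N^{(n)}$, $S_n$ is Gamma$\bigl(\sum_j jN_j,1\bigr)=$ Gamma$(T_n,1)$. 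On the event $T_n=n$ this is Gamma$(n,1)$ \emph{regardless of which configuration of $\bm N^{(n)}$ produced $T_n=n$}. Hence the conditional law of $S_n$ given $T_n=n$ is exactly Gamma$(n,1)$, and therefore $V_n\mid\{W_n=1\}$ is a centered Gamma$(n,1)$, i.e.\ $S_n-n$ with $S_n\sim$ Gamma$(n,1)$. Writing the Gamma$(n,1)$ density $s\mapsto \Gamma(n)^{-1}s^{n-1}e^{-s}$, substituting $s=n+x\sqrt n$ (so $ds=\sqrt n\,dx$) and collecting terms gives
\begin{align*}
f_{V_n\mid W_n=1}(x)=\Gamma(n)^{-1}(n+x\sqrt n)^{n-1}e^{-n-x\sqrt n}\sqrt n
=\Gamma(n)^{-1}n^{n-1/2}e^{-n-x\sqrt n}\bigl(1+x/\sqrt n\bigr)^{n-1},
\end{align*}
valid for $x>-\sqrt n$; off that range the density is zero, which is what the stated formula encodes for $x\in\R$ in the limit.

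For the uniform convergence to the standard normal density, the natural route is the local central limit theorem for the Gamma$(n,1)$ family, but since we want a clean uniform statement it is cleanest to argue directly. Take logarithms: for $x>-\sqrt n$,
\begin{align*}
\log f_{V_n\mid W_n=1}(x)=-\log\Gamma(n)+(n-\tfrac12)\log n - n - x\sqrt n + (n-1)\log\bigl(1+x/\sqrt n\bigr).
\end{align*}
By Stirling, $-\log\Gamma(n)+(n-\tfrac12)\log n - n = -\tfrac12\log(2\pi)+o(1)$ uniformly. Expanding $\log(1+x/\sqrt n)=x/\sqrt n - x^2/(2n)+x^3/(3n^{3/2})-\cdots$ and multiplying by $(n-1)$, the linear term $(n-1)x/\sqrt n$ cancels the $-x\sqrt n$ up to a $-x/\sqrt n$ remainder, leaving $-x^2/2$ plus an error that is $o(1)$ on compact $x$-sets and, more importantly, is controlled uniformly on all of $\R$ by the concavity of $\log(1+\cdot)$: the function $x\mapsto (n-1)\log(1+x/\sqrt n) - (n-1)x/\sqrt n$ is nonpositive and dominates (from below, in absolute value) a fixed multiple of $x^2$ for $n$ large, so the density is bounded above by $C\,e^{-cx^2}$ uniformly in $n$ and $x$. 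Combined with pointwise convergence to $\phi(x)=(2\pi)^{-1/2}e^{-x^2/2}$, this gives $L^1$ convergence and then, by Scheffé together with the uniform Gaussian tail bound, uniform convergence of the densities.

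The only genuinely delicate point is making the "uniform in $x$" claim precise rather than just "on compacts." The tails for $x\to+\infty$ are easy because $\log(1+x/\sqrt n)$ grows only logarithmically while $-x\sqrt n + (n-1)x/\sqrt n$ contributes the decay; for $x$ in the bulk a second-order Taylor expansion with Lagrange remainder suffices; and for $x$ near the left endpoint $-\sqrt n$ the density vanishes, so there is nothing to control there except to note that $-\sqrt n\to-\infty$. I would package these three regimes into a single uniform bound $\sup_{x\in\R}\bigl|f_{V_n\mid W_n=1}(x)-\phi(x)\bigr|\to 0$. That is the main obstacle; everything else is the exact gamma computation above, which is essentially forced.
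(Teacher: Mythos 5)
Your identification of the conditional law is correct, and it takes a genuinely different (and more elementary) route than the paper. The paper works entirely on the Fourier side: it writes the conditional characteristic function of $V_n$ given $W_n=1$ as a contour integral, applies Cauchy's formula to extract the coefficient of $z^n$, identifies the result as $e^{-nit/c_n}(1-it/c_n)^{-n}$ --- the characteristic function of a standardized Gamma$(n,1)$ --- and then gets uniform convergence of the density by citing the local limit theorem for densities (Feller, Vol.~II, Theorem~2, p.~516). You instead observe directly that, conditionally on $\bm N^{(n)}$, the sum $S_n$ is Gamma$(T_n,1)$, so that conditioning on $T_n=n$ yields an exact Gamma$(n,1)$ law irrespective of the configuration of the $N_j$; the density formula then follows from a change of variables. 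This is shorter and more transparent, and it explains \emph{why} the answer is an exact gamma density rather than merely verifying it; the paper's Fourier route has the advantage of setting up the Cauchy-formula machinery that is reused in the harder Lemmas 4.3--4.5, which is presumably why the author chose it here.

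One caution on the second half: Scheff\'e's theorem gives $L^1$ convergence of the densities, not uniform convergence, so that step as written does not deliver the stated conclusion. Your fallback --- a direct three-regime bound --- is the right repair, but note that your claimed uniform Gaussian domination $f_{V_n|W_n=1}(x)\le Ce^{-cx^2}$ is not literally true for $x\gg\sqrt n$, where the exponent decays like $-cx\sqrt n$ rather than $-cx^2$; this is still ample for uniform smallness in the tail, so the argument survives, but the bound should be stated in that weaker form. Alternatively, once you have identified $V_n\mid\{W_n=1\}$ as a standardized Gamma$(n,1)$, you can simply invoke the same local limit theorem the paper cites and avoid the hand computation altogether.
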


\begin{proof}
By Fourier inversion, the conditional characteristic function is given by:
\begin{align*}
&\frac1{\P\{W_n=1\}}\frac1{2\pi}\int_{-\pi}^{\pi}\E\exp\left\{it V_n+niuW_n-niu\right\}\,du\\
&=\frac1{\P\{W_n=1\}}\frac1{2\pi}\int_{-\pi}^{\pi}\exp\left\{\sum_{j=1}^n\frac{e^{i j (u-t/c_n)}\left(1-it/c_n\right)^{-j}-1}{j}-niu\right\}\,du,
\end{align*}
where $c_n=\sqrt{n}$, see (\ref{Poissonized_charfu}). Denoting the contour $w\mapsto e^{iw},\,w\in[-\pi,\pi),$ by $C$, we write this in the form
\begin{align}
\label{Cauchy_asymptotics}
&P\{W_n=1\}^{-1}\exp\Biggl\{-\sum_{j=1}^n\frac1j\Biggr\}\frac1{2\pi i}\int_{C}\exp\left\{\sum_{j=1}^n\frac{(\b_n(t) z)^j}{j}\right\}z^{-n-1}\,dz\nonumber\\
&=\frac1{2\pi i}\int_{C}\exp\left\{\sum_{j=1}^n\frac{(\b_n(t) z)^j}{j}\right\}z^{-n-1}\,dz,
\end{align}
where $\b_{n}(t)$ is given by:
\begin{align*}
\b_{n}(t)=e^{-\frac{it}{c_n}}\left(1-\frac{it}{c_n}\right)^{-1}\,,
\end{align*}
and where we use:
\begin{align*}
\P\{W_n=1\}=\exp\Biggl\{-\sum_{j=1}^n\frac1j\Biggr\},
\end{align*}
by Lemma 3.2 of \cite{piet_ron:83}.
An application of Cauchy's formula yields:
\begin{align*}
\frac1{2\pi i}\int_{C}\exp\left\{\sum_{j=1}^n\frac{(\b_n(t) z)^j}{j}\right\}z^{-n-1}\,dz=\b_n(t)^n,
\end{align*}
where we use that the coefficient of $z^n$ in the power series around $z=0$ of the function $z\mapsto\exp\{\sum_{j=1}^n (\b_n(t)z)^j/j\}$ is the same as the coefficient of $z^n$ in the power series of the function $z\mapsto (1-\b_n(t) z)^{-n}$.

Hence
\begin{align*}
&\E\left\{e^{it V_n}\Bigm|W_n=1\right\}=\frac1{\P\{W_n=1\}}\frac1{2\pi}\int_{-\pi}^{\pi}\E\exp\left\{it V_n+niuW_n-niu\right\}\,du\\
&=\b_n(t)^n=\exp\left\{-nit/c_n\right\}\left(1-\frac{it}{c_n}\right)^{-n}.
\end{align*}
This is just the characteristic function of the sum of $n$ standardized exponential variables, and hence its density tends uniformly to the standard normal density by Theorem 2 on p.\ 516 of \cite{feller2:71}.
\end{proof}

\vspace{0.3cm}
So, in particular, we get:
\begin{align*}
\P\{W_n=1\}f_{V_n|W_n=1}(0)=\frac1{\sqrt{2\pi}}\exp\biggl\{-\sum_{j=1}^n 1/j\biggr\}(1+o(1))=\frac{e^{-\g}}{n\sqrt{2\pi}}(1+o(1)),\qquad n\to\infty,
\end{align*}
where $\g$ is Euler's gamma. The characteristic function of $(U_n|V_n=0,W_n=1)$ is therefore given by
\begin{align}
\label{main_conditional_charfu}
&\frac1{4\pi^2}\int_{t=-\infty}^{\infty}\int_{-\pi}^{\pi}\E e^{isU_n+itV_v+iuW_n-inu}\,du\,dt\bigm/\left(\P\{W_n=1\}f_{V_n|W_n=1}(0)\right)\nonumber\\
&\sim\frac{ne^{\g}}{(2\pi)^{3/2}}\int_{t=-\infty}^{\infty}\int_{-\pi}^{\pi}\E e^{isU_n+itV_v+iuW_n-inu}\,du\,dt.
\end{align}

We now consider the characteristic function $\f_{nj}$, defined by:
\begin{align}
\label{def_phi_nj}
\f_{nj}(s,t)=E\exp\left\{is\frac{\left\{h(j/S_{j1})-h(1)\right\}S_{j1}+h'(1)\left(S_{j1}-j\right)}{b_n}+it\frac{S_{j1}-j}{c_n}\right\}.
\end{align}
which involves the components of the random variables $U_n$ and $V_n$. Its asymptotic behavior is determined using a saddle point method.

\begin{lemma}
\label{lemma:charfu_phi_n}
The characteristic function (\ref{def_phi_nj}) satisfies
\begin{align}
\label{deBruijn_formula}
\f_{nj}(s,t)
\sim\frac1j\exp\left\{-\frac{jt^2}{2n}+O\left(\frac{j}{nb_n}\right)\right\}\left\{1+\frac{ish''(1)}{2b_n}-\frac{3s^2h''(1)^2}{8b_n^2}\right\},
\qquad j\to\infty.
\end{align}
uniformly for $t$ in a bounded interval.
\end{lemma}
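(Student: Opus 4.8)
The plan is to write $\f_{nj}(s,t)$ as an explicit integral over the density of $S_{j1}\sim\text{Gamma}(j,1)$ and evaluate it by a saddle point (Laplace) argument, exploiting the cancellation built into the variance reducing term $h'(1)(S_{j1}-j)$. Since $S_{j1}$ has density $x^{j-1}e^{-x}/\Gamma(j)$ on $(0,\infty)$,
\[
\f_{nj}(s,t)=\frac1{\Gamma(j)}\int_0^\infty\exp\left\{\frac{is}{b_n}\,g_j(x)+\frac{it}{c_n}(x-j)\right\}x^{j-1}e^{-x}\,dx,\qquad g_j(x):=\{h(j/x)-h(1)\}x+h'(1)(x-j).
\]
Writing $x=j(1+\delta)$ one checks that $g_j(x)=\tfrac12 h''(1)\,j\delta^2/(1+\delta)+j\,O(\delta^3)$: the contributions linear in $\delta$ cancel, so $g_j$ has a double zero at $x=j$ with leading part $\tfrac12 h''(1)(x-j)^2/x$. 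Equivalently $g_j(S_{j1})=\tfrac12 h''(1)(S_{j1}-j)^2/S_{j1}+R_j$ with a remainder $R_j$ of order $|S_{j1}-j|^3/j^2$. This quadratic leading term is what turns $\f_{nj}$ into a Gaussian quadratic-form integral rather than something that blows up.

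The integrand is dominated by the Gamma weight $x^{j-1}e^{-x}$, whose Laplace maximum (``saddle'') sits at $x=j-1$ with width $\sqrt j$; since $h$ is real on $(0,\infty)$, the full integrand has modulus $x^{j-1}e^{-x}/\Gamma(j)$, so the contribution of $|x-j|>C\sqrt{j\log j}$ is super-polynomially small and may be discarded. On the remaining range I would put $S_{j1}=j+\sqrt j\,Z_j$, $Z_j=(S_{j1}-j)/\sqrt j$, which obeys a local central limit theorem with Edgeworth corrections of order $j^{-1/2}$; there $g_j(S_{j1})=\tfrac12 h''(1)Z_j^2(1+O(Z_j/\sqrt j))$ and $(S_{j1}-j)/c_n=\sqrt{j/n}\,Z_j$, so, with $Z\sim N(0,1)$,
\[
\f_{nj}(s,t)=\E\left[\exp\left\{\tfrac{ish''(1)}{2b_n}Z^2+i\sqrt{\tfrac jn}\,t\,Z\right\}\right](1+o(1))=\Bigl(1-\tfrac{ish''(1)}{b_n}\Bigr)^{-1/2}\exp\left\{-\frac{jt^2/(2n)}{1-ish''(1)/b_n}\right\}(1+o(1)),
\]
the last equality being the standard Gaussian quadratic-form identity. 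Expanding $(1-ish''(1)/b_n)^{-1/2}=1+\tfrac{ish''(1)}{2b_n}-\tfrac{3s^2h''(1)^2}{8b_n^2}+O(b_n^{-3})$ and $\exp\{-\tfrac{jt^2/(2n)}{1-ish''(1)/b_n}\}=\exp\{-\tfrac{jt^2}{2n}+O(\tfrac{j}{nb_n})\}$, uniformly for $t$ in a bounded interval, reproduces the form displayed in (\ref{deBruijn_formula}) (modulo the prefactor $1/j$, on which see the end of this sketch).

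What remains is to show the discarded pieces are of strictly lower order, and this bookkeeping is where I expect the real work. One must verify that (i) the truncation error is negligible, which is immediate from the reality of $h$ and the super-polynomial decay of $\P(|S_{j1}-j|>C\sqrt{j\log j})$; (ii) the cubic-and-higher Taylor terms of $g_j$, of size $O(|S_{j1}-j|^3/j^2)$, integrate to $o(b_n^{-2})+O(j/(nb_n))$ after division by $b_n$, which follows from the moment bounds $\E|S_{j1}-j|^k=O(j^{\lceil k/2\rceil})$ together with $|e^{i(\cdot)}|\le 1$; and (iii) the $O(j^{-1/2})$ Edgeworth correction to the law of $Z_j$, paired with the $Z$-linear and $Z$-quadratic parts of the exponent, contributes only at a level already absorbed into the stated error. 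The genuinely delicate point is pushing the saddle point expansion to third order with full uniformity over the bounded $t$-range and recovering the exact constants $\tfrac12$ and $\tfrac38$ in the bracket $\{1+\tfrac{ish''(1)}{2b_n}-\tfrac{3s^2h''(1)^2}{8b_n^2}\}$; items (i)--(iii) are routine given Condition \ref{condition_h}. Finally I note that, read literally, the right-hand side of (\ref{deBruijn_formula}) does not equal $1$ at $s=t=0$ for $j\ge2$, whereas $\f_{nj}(0,0)=1$; the computation above yields the bracket with no factor $1/j$, so the $1/j$ presumably belongs in the definition of $\f_{nj}$ (or in its use inside the Poissonized characteristic function) rather than in this identity.
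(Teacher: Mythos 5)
Your proposal arrives at the correct asymptotics with the right constants, but it takes a genuinely different route from the paper. The paper works on the complex side throughout: after substituting $x\mapsto jx$ it writes $\f_{nj}(s,t)=\frac{j^j}{\Gamma(j)}\int_0^\infty e^{jf_{n,s,t}(x)}x^{-1}\,dx$, locates the saddle near $z_0(t)=(1-it/c_n)^{-1}$, and reads the answer off the steepest-descent formula, so that the bracket $\bigl\{1+\frac{ish''(1)}{2b_n}-\frac{3s^2h''(1)^2}{8b_n^2}\bigr\}$ arises as the expansion of $\a_n/|f_{n,s,t}''(z_0(t))|^{1/2}$ with $\a_n=e^{ish''(1)/(2b_n)}$ the phase of the steepest-descent axis, and the Gaussian factor comes from $e^{j\{f_{n,s,t}(z_0(t))+1\}}$. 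You instead stay on the real line: you isolate the double zero of $g_j$ at $x=j$ (the same cancellation that places the saddle at $z_0(t)$), pass to $Z_j=(S_{j1}-j)/\sqrt j$, and invoke the exact identity $\E e^{aZ^2+bZ}=(1-2a)^{-1/2}e^{b^2/(2(1-2a))}$, which packages the paper's modulus $|f''|^{-1/2}$ and phase $\a_n$ into the single complex square root $(1-ish''(1)/b_n)^{-1/2}$; its expansion delivers the coefficients $\tfrac12$ and $\tfrac38$ directly. What your route buys is transparency of the constants; what it costs is that replacing $Z_j$ by $Z$ must be quantified (your item (iii)): the bracket is being resolved to order $b_n^{-2}=O(1/\log n)$ and the error must stay summable against $1/j$ downstream, so you need an Edgeworth-type bound for $\E f(Z_j)-\E f(Z)$ with the test functions $f(z)=\exp\{\tfrac{ish''(1)}{2b_n}z^2+i\sqrt{j/n}\,tz\}$, whose derivatives grow in $z$ --- weak convergence alone does not suffice. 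The paper's proof is no more explicit about the corresponding uniformity of its saddle-point error term, so this is a shared rather than a new gap. One further caveat: your truncation step uses that the integrand has modulus $x^{j-1}e^{-x}/\Gamma(j)$, which requires $h$ to be real on $(0,\infty)$; this is not listed in Condition \ref{condition_h}, though it is implicit (otherwise $U_n$ is not real) and holds in both of the paper's examples.

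Your closing observation about the factor $1/j$ is correct and worth stating plainly: since $|\f_{nj}|\le1$ and $\f_{nj}(0,0)=1$, the right-hand side of (\ref{deBruijn_formula}) cannot carry a prefactor $1/j$. The spurious factor enters in the paper's display (\ref{deBruijn_formula1a}), where the Stirling normalization $j^j/\Gamma(j)\sim e^j\sqrt{j/(2\pi)}$ has been miscounted; the version without $1/j$ is the one actually substituted into (\ref{asymptotic_charfu}), so the downstream argument is unaffected.
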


\begin{proof}
After a change of variables, $\f_{nj}(s,t)$ can be written:
\begin{align*}
&\frac{j^j}{\Gamma(j)}\int_{x=0}^{\infty}\exp\left[j\left\{is\frac{\{h(1/x)-h(1)\}x+h'(1)(x-1)}{b_n}+\frac{it(x-1)}{c_n} -x+\log x\right\}\right]x^{-1}\,dx\nonumber\\
&=\frac{j^j}{\Gamma(j)}\int_{x=0}^{\infty}\exp\left\{jf_{n,s,t}(x)\right\}x^{-1}\,dx,
\end{align*}
where $f_{n,s,t}$ is defined on the right half plane by:
\begin{align}
\label{def_f_general}
f_{n,s,t}(z)=is\frac{\left\{h(1/z)-h(1)\}z+h'(1)(z-1)\right\}}{b_n}+\frac{it(z-1)}{c_n} -z+\log z,\qquad \text{\rm Re}(z)>0,
\end{align}
The derivative of $f_{n,s,t}$ is given by
\begin{align}
\label{derivative_f_general}
f_{n,s,t}'(z)=\frac{is\{h(1/z)-h(1)\}}{b_n}-\frac{is h'(1/z)}{b_nz}+\frac{ish'(1)}{b_n}+\frac{it}{c_n}-1+\frac1z.
\end{align}
A saddle point is given by the equation
\begin{align*}
f_{n,s,t}'(z)=0.
\end{align*}
Multiplying both sides of this equation with $z$, we get the equation
\begin{align}
\label{saddle_point_eq2}
z=g_n(z),\qquad g_n(z)\stackrel{\text{\small def}}=\left(1-\frac{it}{c_n}\right)^{-1}\left\{1+\frac{is\{h(1/z)-h(1)\}z}{b_n}-\frac{is h'(1/z)}{b_n}+\frac{ish'(1)z}{b_n}\right\}
\end{align}
This equation has, for sufficiently large $n$, a unique solution in a neighborhood of $z_0(t)=\left(1-\frac{it}{c_n}\right)^{-1}$, as is clear from the following properties.
\begin{enumerate}
\item[(i)]
\begin{align*}
g_n'(z)=\left(1-\frac{it}{c_n}\right)^{-1}b_n^{-1}\left\{ish(1)+ish(1/z)+ish'(1)-\frac{ish'(1/z)}{z}+\frac{ish''(1/z)}{z^2}\right\}\longrightarrow0,\qquad n\to\infty.
\end{align*}
\item[(ii)] For $z_0(t)=\left(1-\frac{it}{c_n}\right)^{-1}$ we have:
\begin{align*}
g_n(z_0(t))-z_0(t)=-\frac{is\left\{h(1)-h(1-it/c_n)-h'(1)/c_n+(1-it/c_n)h'(1-it/c_n)\right\}}{b_n(1-it/c_n)^2}
\longrightarrow0,\qquad n\to\infty,
\end{align*}
\end{enumerate}
see, e.g., (3.3), p.\ 56 of \cite{dieudonne:68}.
So the saddle point is given by a solution of the equation (\ref{saddle_point_eq2}) and can be found by the simple iteration $z_{k+1}=g_n(z_k)$, starting at $z_0(t)$. We can, however, also take $z_0(t)$ itself instead of the real saddle point for the asymptotic expansion, since this gives us the same terms in the expansion we need.

The value of $f_{n,s,t}$ at $z_0(t)$ has the following expansion:
\begin{align}
\label{expansion_f_n}
f_{n,s,t}(z_0(t))=-1-\frac{t^2+ist^2h''(1)/(2b_n)+o\left(b_n^{-1}\right)}{2n}=-1-\frac{t^2}{2n}+O\left(\frac1{nb_n}\right).
\end{align}

Furthermore,
\begin{align*}
f_{n,s,t}''(z)=\frac{-b_nz+ish''(1/z)}{b_nz^3}\,.
\end{align*}
so
\begin{align*}
f_{n,s,t}''(z_0(t))=-1+\frac{ish''(1)}{b_n}+O\left(c_n^{-1}\right).
\end{align*}
It follows that we get:
\begin{align*}
&\frac{\a_n}{|f_{n,s,t}''(z_0(t))|^{1/2}}\approx1+\frac{ish''(1)}{2b_n}-\frac{3h''(1)^2s^2}{8b_n^2}+O\left(c_n^{-1}\right)
=1+\frac{ish''(1)}{2b_n}-\frac{s^2}{2\log n}+O\left(c_n^{-1}\right),
\end{align*} 
where $\a_n=\exp\left\{ish''(1)/(2b_n)\right\}$ is a complex number with absolute value 1, corresponding to the argument of the main axis of the saddle point (note that the argument of this axis is $\tfrac12\pi-\tfrac12\arg f_{n,s,t}''(z_0(t))$, see \cite{debruijn:81}, p.\ 84).

Evaluating the integrand at $z_0(t)$, and applying Stirling's formula on $\Gamma(j)$, we obtain the following asymptotic representation:
\begin{align}
\label{deBruijn_formula1a}
\f_{nj}(s,t)&=\frac{j^j}{\Gamma(j)}\int_{x=0}^{\infty}\exp\left\{jf_{n,s,t}(x)\right\}x^{-1}\,dx
\sim\frac{\a_n\sqrt{2\pi}e^{j\{f_{n,s,t}(z_0(t))+1\}}}{\sqrt{2\pi j}\sqrt{j |f_{n,s,t}''(z_0(t))|}}\nonumber\\
&\sim\frac1j\exp\left\{-\frac{jt^2}{2n}+O\left(\frac{j}{nb_n}\right)\right\}\left\{1+\frac{ish''(1)}{2b_n}-\frac{3s^2h''(1)^2}{8b_n^2}\right\},\qquad j\to\infty.
\end{align}
see, e.g., \cite{debruijn:81}, (5.10.3) on p.\ 92 for the first asymptotic equivalence. The second asymptotic equivalence holds uniformly for $t$ in a bounded interval. Note that this corresponds to changing the path of integration for $x$ to a path in the complex plane, going through the saddle point.
\end{proof}

\vspace{0.3cm}
We can now prove the following property of the characteristic function of $(U_n,V_n,W_n)$. This is ``almost'' the Fourier inversion for $(V_n,W_n)$, but we still have to extend the inversion for $V_n$ to the whole real line. Cauchy's formula is an essential ingredient of the proof of Lemma \ref{lemma:region_A}.

\begin{lemma}
\label{lemma:region_A}
Let $U_n$ be defined by (\ref{def_U_n}) and let $h$ satisfy condition \ref{condition_h}.
Then, for each $M>0$:
\begin{align*}
\frac{ne^{\g}}{(2\pi)^{3/2}}\int_{t=-M}^{M}\int_{-\pi}^{\pi}\E e^{isU_n+itV_v+niuW_n-inu}\,du\,dt\longrightarrow 
\frac{e^{-\tfrac12s^2}}{\sqrt{2\pi}}\int_{t=-M}^M e^{-\tfrac12t^2}\,dt,\qquad n\to\infty,
\end{align*}
where $\g$ is Euler's gamma.
\end{lemma}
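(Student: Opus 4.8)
The plan is to follow the pattern of the proofs of Theorem~\ref{th:Sparre_Andersen} and Lemma~\ref{lemma:conditional_dens}: write the joint characteristic function of $(U_n,V_n,W_n)$ in product form, carry out the Fourier inversion in $u$ by Cauchy's formula, and then extract the $z^n$-coefficient using the saddle-point asymptotics of Lemma~\ref{lemma:charfu_phi_n}. First I would condition on $(N_1,\dots,N_n)$; given the $N_j$ the gamma variables $S_{ji}$ are independent, and $\E\zeta^{N_j}=\exp\{(\zeta-1)/j\}$ for a Poisson$(1/j)$ variable, so with the notation (\ref{def_phi_nj}),
\begin{align*}
\E\,e^{isU_n+itV_n+niuW_n}=e^{-ish''(1)\log n/(2b_n)}\exp\Biggl\{\sum_{j=1}^n\frac{e^{iuj}\f_{nj}(s,t)-1}{j}\Biggr\},
\end{align*}
the prefactor being the deterministic term $-\tfrac12h''(1)\log n$ in $U_n$, and the $e^{iuj}$ inside the sum coming from $nW_n=T_n=\sum_j jN_j$. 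Setting $z=e^{iu}$ and using $\P\{W_n=1\}=\exp\{-\sum_{j=1}^n1/j\}$, Cauchy's formula turns the $u$-integral into a coefficient extraction,
\begin{align*}
\frac1{2\pi}\int_{-\pi}^{\pi}\E\,e^{isU_n+itV_n+niuW_n-inu}\,du=e^{-ish''(1)\log n/(2b_n)}e^{-\sum_{j=1}^n1/j}\,[z^n]\exp\Biggl\{\sum_{j=1}^n\frac{z^j\f_{nj}(s,t)}{j}\Biggr\},
\end{align*}
truncation of the exponent at $j=n$ being irrelevant for $[z^n]$. Since $\sum_{j=1}^n1/j=\log n+\g+o(1)$, multiplication by $ne^{\g}/(2\pi)^{3/2}$ gives, uniformly in $t$,
\begin{align*}
g_n(t):=\frac{1}{\sqrt{2\pi}}(1+o(1))\,e^{-ish''(1)\log n/(2b_n)}\,[z^n]\exp\Biggl\{\sum_{j=1}^n\frac{z^j\f_{nj}(s,t)}{j}\Biggr\},
\end{align*}
and by Fubini the left-hand side of the lemma equals $\int_{-M}^{M}g_n(t)\,dt$.

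Next I would identify the generating function. By Lemma~\ref{lemma:charfu_phi_n}, for $t\in[-M,M]$ one has $\f_{nj}(s,t)\sim\kappa_n(s)\,e^{-jt^2/(2n)}$ as $j\to\infty$, with $\kappa_n(s)=1+\tfrac{ish''(1)}{2b_n}-\tfrac{3s^2h''(1)^2}{8b_n^2}$. Replacing $\f_{nj}(s,t)$ by $\kappa_n(s)e^{-jt^2/(2n)}$ and writing $\rho_n=e^{-t^2/(2n)}$, the exponent becomes $\kappa_n(s)\sum_{j=1}^n(\rho_n z)^j/j$, whose exponential has the same first $n$ Taylor coefficients as $z\mapsto(1-\rho_n z)^{-\kappa_n(s)}$. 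Using $[z^n](1-\rho z)^{-\kappa}=\rho^n\prod_{j=1}^n(1+\tfrac{\kappa-1}{j})$ together with $\rho_n^{\,n}=e^{-t^2/2}$ gives
\begin{align*}
[z^n]\exp\Biggl\{\sum_{j=1}^n\frac{z^j\f_{nj}(s,t)}{j}\Biggr\}\sim e^{-t^2/2}\prod_{j=1}^n\left(1+\frac{\kappa_n(s)-1}{j}\right).
\end{align*}

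Then I would analyse the product together with the deterministic prefactor. With $\d=\kappa_n(s)-1=\tfrac{ish''(1)}{2b_n}-\tfrac{3s^2h''(1)^2}{8b_n^2}\to0$,
\begin{align*}
\log\prod_{j=1}^n\left(1+\frac{\d}{j}\right)=\d\sum_{j=1}^n\frac1j-\frac{\d^2}{2}\sum_{j=1}^n\frac1{j^2}+O(\d^3)=\d\log n+o(1),
\end{align*}
and, since $b_n^2=\tfrac34h''(1)^2\log n$, one finds $\d\log n=\tfrac{ish''(1)}{2b_n}\log n-\tfrac12s^2+o(1)$, whose first (purely imaginary) term cancels exactly against the factor $e^{-ish''(1)\log n/(2b_n)}$. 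Hence $g_n(t)\to\tfrac1{\sqrt{2\pi}}e^{-\frac12 s^2}e^{-\frac12 t^2}$ for each fixed $t$; since $|e^{-ish''(1)\log n/(2b_n)}|=1$ and the modulus of the product is bounded uniformly in $n$ (because $\text{Re}\,\d\le0$ and $\d\to0$, so $\prod_{j=1}^n|1+\d/j|^2\le\prod_{j\ge1}(1+|\d|^2/j^2)=O(1)$), $g_n$ is uniformly bounded on the compact interval $[-M,M]$, and the bounded convergence theorem gives $\int_{-M}^{M}g_n(t)\,dt\to\tfrac{e^{-s^2/2}}{\sqrt{2\pi}}\int_{-M}^{M}e^{-t^2/2}\,dt$, as claimed.

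The main obstacle is making the substitution of the second paragraph rigorous, since Lemma~\ref{lemma:charfu_phi_n} controls $\f_{nj}(s,t)$ only for large $j$. Writing $\f_{nj}(s,t)=\kappa_n(s)e^{-jt^2/(2n)}(1+r_{nj})$, one has to show that the extra factor $\exp\{\sum_{j=1}^n j^{-1}\kappa_n(s)e^{-jt^2/(2n)}z^jr_{nj}\}$ perturbs the coefficient of $z^n$ only by $1+o(1)$; on $|z|=1$ the bound $|e^{-jt^2/(2n)}z^j|\le1$ holds, but $\sum_{j=1}^n j^{-1}e^{-jt^2/(2n)}$ is of order $\log(n/t^2)$, so the crude estimate $|r_{nj}|\le C$ does not suffice and one needs the sharper form of the saddle-point expansion (that $r_{nj}=O(1/j)+O(j/(nb_n))$ beyond a fixed index $j_0$, the finitely many terms with $j\le j_0$ being handled separately), and a similar argument to replace $[z^n]\exp\{\cdots\}$ by $[z^n](1-\rho_n z)^{-\kappa_n(s)}$. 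Everything else reduces to routine estimates with harmonic numbers, Stirling's formula, and dominated convergence.
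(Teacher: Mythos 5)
Your proposal is correct and follows essentially the same route as the paper: Poisson conditioning to get the product form of the joint characteristic function, Cauchy's formula to reduce the $u$-integral to the coefficient of $z^n$, comparison with $(1-\rho_n z)^{-\kappa_n(s)}$, and the cancellation of the $ish''(1)\log n/(2b_n)$ drift against the product $\prod_j(1+(\kappa_n(s)-1)/j)$, leaving $e^{-s^2/2}e^{-t^2/2}$. The gap you flag at the end --- that Lemma \ref{lemma:charfu_phi_n} is only an asymptotic in $j$ and that substituting it into the exponent of the generating function needs uniform control of the remainders $r_{nj}$ --- is real, but the paper's own proof passes over it in exactly the same way (it simply inserts the expansion with an unquantified $o(1)$), so you have not lost anything relative to the published argument.
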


\begin{proof}
Let $\f_{nj}$ be defined by (\ref{def_phi_nj}). Since, by (\ref{b_n-c_n}) and (\ref{def_U_n}), 
\begin{align*}
U_n&=
\frac1{\sqrt{\tfrac34h''(1)^2\log n}}\Biggl\{\sum_{j=1}^n \sum_{i=1}^{N_j} \left[\left(h\left(\frac{j}{S_{ji}}\right)-h(1)\right)S_{ji}+h'(1)\left(S_{ji}-j\right)\right]-\tfrac12h''(1)\log n\Biggr\}\\
&=b_n^{-1}\Biggl\{\sum_{j=1}^n \sum_{i=1}^{N_j} \left[\left(h\left(\frac{j}{S_{ji}}\right)-h(1)\right)S_{ji}+h'(1)\left(S_{ji}-j\right)\right]-\tfrac12h''(1)\log n\Biggr\},
\end{align*}
where
\begin{align*}
b_n=\sqrt{\tfrac34h''(1)^2\log n},
\end{align*}
we get, evaluating the probabilities for the Poisson random variables $N_j$ in the third line:
\begin{align}
\label{asymptotic_charfu}
&\E\exp\left\{i s U_n+i t V_n+i u W_n\right\}=\exp\left\{-\tfrac12ish''(1)(\log n)/b_n\right\}\E\left\{\prod_{j=1}^n\left\{\f_{nj}(s,t)\right\}^{N_j} e^{iuN_j/n}\right\}\nonumber\\
&=\exp\Biggl\{-\tfrac12ish''(1)(\log n)/b_n+\sum_{j=1}^n\frac1j\left\{\f_{nj}(s,t)e^{iju/n}-1\right\}\Biggr\}\nonumber\\
&=\exp\left[-\tfrac12ish''(1)(\log n)/b_n+\sum_{i=1}^n\frac1j\left\{\exp\left(-\frac{jt^2}{2n}+\frac{iju}{n}\right)-1\right\}\right.\nonumber\\
&\qquad\qquad\qquad\qquad\qquad\qquad\qquad\qquad+\left.\left\{\frac{ish''(1)}{2b_n}-\frac{3s^2h''(1)^2}{8b_n^2}\right\}\sum_{j=1}^n\frac1j\exp\left(-\frac{jt^2}{2n}+\frac{ij u}{n}\right)+o(1)\right].
\end{align}

As in the proof of Lemma \ref{lemma:conditional_dens} we consider the contour $w\mapsto e^{iw},\,w\in[-\pi,\pi),$ and denote this contour by $C$. So, integrating (\ref{asymptotic_charfu}) w.r.t.\ $u$ and changing variables we get:
\begin{align}
\label{Cauchy_asymptotics2}
&\frac1{2\pi}\int_{u=-\pi}^{\pi}\E\exp\left\{i s U_n+i t V_n+i n u W_n-inu\right\}\,du\\
&=\exp\Biggl\{-\tfrac12ish''(1)(\log n)/b_n-\sum_{j=1}^n\frac1j\Biggr\}\frac1{2\pi i}\int_{C}\exp\left\{\sum_{j=1}^n\frac{\d_n(\b_n(t) z)^j}{j}\right\}z^{-n-1}\,dz,
\end{align}
where
\begin{align}
\label{def_beta_n}
\b_n(t)\sim\exp\left(-\frac{t^2}{2n}\right),
\end{align}
and
\begin{align}
\label{def_delta_n}
\d_n(t)=\frac{\a_n}{\sqrt{|f_{n,s,t}''(z_0(t))|}}=1+\frac{ish''(1)}{2b_n}-\frac{3s^2h''(1)^2}{8b_n^2}+O\left(b_n^{-3}\right),
\end{align}
and where $\a_n$ is a complex number of absolute value 1, corresponding to the angle of the main axis through the (approximate) saddle point $z_0(t)$.

So we get, by Cauchy's formula, for $t\in[-M,M]$ and $M$ arbitrarily large,
\begin{align}
\label{fundamental_equivalence}
&\frac1{2\pi}\int_{-\pi}^{\pi}\E\exp\left\{i s U_n+i t V_n+i nu W_n-niu\right\}\,du\nonumber\\
&\sim \exp\Biggl\{-\tfrac12ish''(1)(\log n)/b_n-\sum_{j=1}^n\frac1j\Biggr\}(-1)^n{-\d_n(t)\choose n}\b_n(t)^n\nonumber\\
&=\exp\Biggl\{-\tfrac12ish''(1)(\log n)/b_n-\sum_{j=1}^n\frac1j\Biggr\}\prod_{j=1}^n\left(1+\frac{\d_n(t)-1}{j}\right)\b_n(t)^n\nonumber\\
&\sim\exp\Biggl\{-\tfrac12ish''(1)(\log n)/b_n-\sum_{j=1}^n\frac1j\Biggr\}\exp\left\{-\tfrac12t^2+\sum_{j=1}^n\log\left(1+\frac{\d_n(t)-1}j\right)\right\}\nonumber\\
&=\exp\Biggl\{-\tfrac12ish''(1)(\log n)/b_n-\log n-\g\Biggr\}\exp\left\{-\tfrac12t^2+\{\d_n(t)-1\}\sum_{j=1}^n\frac1j+o(1)\right\}\nonumber\\
&=\frac1n\exp\Biggl\{-\tfrac12ish''(1)(\log n)/b_n-\g\Biggr\}\exp\left\{-\tfrac12t^2+\left\{\frac{ish''(1)}{2b_n}-\frac{3s^2h''(1)^2}{8b_n^2}\right\}\sum_{j=1}^n\frac1j+o(1)\right\}.
\end{align}

Thus:
\begin{align*}
&\frac{ne^{\g}}{(2\pi)^{3/2}}\int_{t=-M}^{M}\int_{-\pi}^{\pi}\E e^{isU_n+itV_v+niuW_n-inu}\,du\,dt\\
&=\frac1{\sqrt{2\pi}}\int_{t=-M}^M\exp\left\{-\tfrac12t^2-\frac{3s^2h''(1)^2}{8b_n^2}\sum_{j=1}^n\frac1j+o(1)\right\}\,dt\\
&=\exp\left\{-\tfrac12s^2+o(1)\right\}\frac1{\sqrt{2\pi}}\int_{t=-M}^M\exp\left\{-\tfrac12t^2\right\}\,dt.
\end{align*}
\end{proof}

We still have to prove that the remaining part of integral w.r.t.\ the integration variable $t$ can be made arbitrarily small by choosing $M$ large. To this end we split the remaining region into two regions: $A_1=\{t\in\R:M<|t|\le\d n^{1/2}\}$ and $A_2=\{t\in\R:|t|>\d n^{1/2}\}$. This split-up is familiar from inversion theorems for densities, see, e.g., the proof of Theorem 2 on p.\ 516 of \cite{feller2:71}. 
We start with the region  $A_1=\{t\in\R:M<|t|\le \d n^{1/2}\}$.

\begin{lemma}
Let $h$ satisfy condition \ref{condition_h}. Then there exists for each $\e>0$ an $M>0$ and $\d>0$ such that
\begin{align*}
\left|\int_{t:M<|t|\le \d n^{1/2}}\int_{u=-\pi}^{\pi}\E\exp\left\{i s U_n+i tV_n+i nu W_n-niu\right\}\,du\,dt\right|<\e,
\end{align*}
\end{lemma}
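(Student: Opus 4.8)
The plan is to estimate the modulus of the integrand directly, exploiting the structure already exposed in the proof of Lemma~\ref{lemma:region_A}. From \eqref{Cauchy_asymptotics2}, after carrying out the $u$-integration via Cauchy's formula exactly as before, the inner double integral equals
\[
\exp\Bigl\{-\tfrac12ish''(1)(\log n)/b_n-\sum_{j=1}^n\tfrac1j\Bigr\}\prod_{j=1}^n\Bigl(1+\frac{\d_n(t)-1}{j}\Bigr)\b_n(t)^n,
\]
so that, taking absolute values and using $\bigl|\exp\{-\tfrac12ish''(1)(\log n)/b_n\}\bigr|=1$ and $\exp\{-\sum_{j\le n}1/j\}\sim e^{-\g}/n$, the task reduces to bounding
\[
\frac1n\,\Bigl|\b_n(t)\Bigr|^n\,\prod_{j=1}^n\Bigl|1+\frac{\d_n(t)-1}{j}\Bigr|
\]
integrated over $A_1$. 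The point is that for $t$ in this range $|\b_n(t)|=\bigl|e^{-it/c_n}(1-it/c_n)^{-1}\bigr|=(1+t^2/n)^{-1/2}$, which is genuinely less than $1$ and gives the Gaussian-type decay $|\b_n(t)|^n\le\exp\{-\tfrac{n}{2}\log(1+t^2/n)\}\le\exp\{-ct^2\}$ for $|t|\le\d\sqrt n$ with a suitable constant $c>0$; this is precisely the estimate used in the classical density-inversion argument (Feller, loc.\ cit.). The product term $\prod_j|1+(\d_n(t)-1)/j|$ must be controlled to show it does not destroy this decay: writing it as $\exp\{\sum_j\log|1+(\d_n(t)-1)/j|\}\le\exp\{\Re\sum_j(\d_n(t)-1)/j+O(1)\}$ and recalling from \eqref{def_delta_n} that $\Re(\d_n(t)-1)=-3s^2h''(1)^2/(8b_n^2)+O(b_n^{-3})$ is $O(1/\log n)$ uniformly, one gets $\prod_j|1+(\d_n(t)-1)/j|=O(1)$ uniformly in $t\in A_1$. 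Hence the integrand over $A_1$ is $O\bigl(n^{-1}e^{-ct^2}\bigr)$ uniformly, and
\[
\Bigl|\int_{M<|t|\le\d\sqrt n}\!\!\int_{-\pi}^{\pi}\cdots\,du\,dt\Bigr|
\le \frac{C}{n}\int_{|t|>M}e^{-ct^2}\,dt,
\]
which, after multiplying by the normalizing factor $ne^{\g}/(2\pi)^{3/2}$ implicit in the statement, is $<\ee$ once $M$ is large.

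The one subtlety requiring care is that the asymptotic expansion \eqref{deBruijn_formula} for $\f_{nj}(s,t)$ was only claimed uniformly for $t$ in a \emph{bounded} interval, whereas here $t$ ranges up to $\d\sqrt n$; so I cannot quote \eqref{Cauchy_asymptotics2}–\eqref{def_delta_n} verbatim. Instead I would go back to the saddle-point analysis of Lemma~\ref{lemma:charfu_phi_n}: the saddle-point equation \eqref{saddle_point_eq2} still has a solution near $z_0(t)=(1-it/c_n)^{-1}$, but now $|t|/c_n$ need only be $\le\d$, so $z_0(t)$ stays in a fixed compact subset of the right half-plane (bounded away from $0$ and from $\infty$), and the value $\Re f_{n,s,t}(z_0(t))$ picks up the dominant term $-\tfrac12\log(1+t^2/n)$ from $\Re\log z_0(t)-\Re z_0(t)$; the $h$-dependent and $t/c_n$-linear corrections are $O(b_n^{-1})$ uniformly because $h$, $h'$, $h''$ are bounded on that compact set — this is exactly where Condition~\ref{condition_h} is convenient, though on a compact set mere analyticity suffices. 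Thus $|\f_{nj}(s,t)|\le \tfrac{C}{j}\,|\b_n(t)|^j$ with $|\b_n(t)|=(1+t^2/n)^{-1/2}$, uniformly for $j\le n$ and $|t|\le\d\sqrt n$, and then the Poissonization identity $\E\prod_j\{\f_{nj}\}^{N_j}e^{iju/n}=\exp\{\sum_j(\f_{nj}e^{iju/n}-1)/j\}$ together with the same Cauchy's-formula extraction of the $z^n$-coefficient gives the bound above without appealing to the sharp expansion.

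The main obstacle, then, is making the uniform-in-$t$ saddle-point bound $|\f_{nj}(s,t)|\le \tfrac{C}{j}(1+t^2/n)^{-j/2}$ rigorous on the growing range $|t|\le\d\sqrt n$: one must check that the curvature $|f_{n,s,t}''(z_0(t))|$ stays bounded below (it equals $1+O(b_n^{-1})$ near $z_0(t)$, which is fine) and, more importantly, that the tail of the $x$-integral $\frac{j^j}{\Gamma(j)}\int_0^\infty e^{jf_{n,s,t}(x)}x^{-1}\,dx$ away from the saddle point is negligible — i.e.\ that $\Re f_{n,s,t}(x)$ is maximized near $x=z_0(t)$ and drops off, which follows from $\Re f_{n,s,t}(x)=\Re\bigl(is[\cdots]/b_n\bigr)-x+\log x+\text{(linear in }t/c_n)$ being, up to the $O(b_n^{-1})$ perturbation, the strictly concave function $x\mapsto -x+\log x$ plus a small linear-in-$x$ term, so it has a unique interior maximum and Laplace's method applies with the standard error control. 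Once that bound is in hand, the integration over $A_1$ and the choice of $M,\d$ are routine, following the template of Feller's inversion theorem cited in the text.
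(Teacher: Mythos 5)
Your proposal is correct and follows essentially the same route as the paper: redo the saddle-point expansion of $f_{n,s,t}$ at $z_0(t)=(1-it/c_n)^{-1}$ with remainder control valid on the growing range $|t|\le\d\sqrt n$ (the paper does this via Taylor remainders $h''(\th)$ at intermediate points, yielding the $O(jt^2/(nb_n))$ error in (\ref{deBruijn_formula1b})), extract the $z^n$-coefficient by Cauchy's formula as in Lemma \ref{lemma:region_A}, and absorb the perturbation into the Gaussian decay (the paper's $e^{-t^2/2+t^2/4}$ versus your $e^{-ct^2}$) before integrating over $|t|>M$. Your explicit attention to the uniformity of the saddle-point bound and to the tail of the Laplace integral fills in steps the paper leaves implicit, but the argument is the same.
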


\begin{proof}
We consider again the expansion of the function $f_{n,s,t}$ defined by (\ref{def_f_general}) at the point $z_0(t)=(1-it/c_n)^{-1}$.
We get:
\begin{align*}
f_{n,t,u}\left(z_0(t)\right)=-1-\frac{t^2}{2n}-\frac{ist^2h''(\theta)}{2nb_n},
\end{align*}
where $\th$ is a point on the line segment between $1$ and $1/(1-it/c_n)$. Likewise
\begin{align*}
f_{n,t,u}''\left(z_0(t)\right)=-1+\frac{ish''(1)}{b_n}+\frac{2i\th'}{c_n},
\end{align*}
where $\th'$ is a point on the line segment between $1$ and $1/(1-it/c_n)$. 

So we have a local expansion
\begin{align}
\label{deBruijn_formula1b}
\f_{nj}(s,t)\sim\frac1j\exp\left\{-\frac{jt^2}{2n}+O\left(\frac{jt^2}{nb_n}\right)\right\}\left\{1+\frac{ish''(1)}{2b_n}-\frac{3s^2h''(1)^2}{8b_n^2}+O\left(c_n^{-1}\right)\right\},\qquad j\to\infty.
\end{align}
as in (\ref{deBruijn_formula1a}). This means that we can follow the same steps as in the proof of Lemma \ref{lemma:region_A} and that we can choose $M$ and $\d>0$ in such a way that
\begin{align*}
&\left|\frac{ne^{\g}}{(2\pi)^{3/2}}\int_{M<|t|\le \d n^{1/2}}\int_{-\pi}^{\pi}\E e^{isU_n+itV_v+niuW_n-inu}\,du\,dt\right|\\
&\le\exp\left\{-\tfrac12s^2+o(1)\right\}\frac1{\sqrt{2\pi}}\int_{M<|t|\le \d n^{1/2}}\exp\left\{-\tfrac12t^2+\tfrac14t^2\right\}\,dt\\
&=\exp\left\{-\tfrac12s^2+o(1)\right\}\frac1{\sqrt{2\pi}}\int_{M<|t|\le \d n^{1/2}}\exp\left\{-\tfrac14t^2\right\}\,dt<\e.
\end{align*}
\end{proof}

The following lemma deals with the region  $A_2=\{t\in\R:|t|>\d n^{1/2}\}$.
\begin{lemma}
\label{lemma:region_A_2}
Let $h$ satisfy condition \ref{condition_h}.
Then, for each $\d>0$:
\begin{align*}
\int_{|t|>\d c_n}\int_{u=-\pi}^{\pi}\E\exp\left\{i s U_n+i tV_n+i nu W_n-niu\right\}\,du\,dt\longrightarrow 0,\qquad n\to\infty.
\end{align*}
\end{lemma}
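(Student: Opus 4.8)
The plan is to bound the modulus of the inner $u$-integral uniformly over $|t| > \delta c_n$ and show it is integrable in $t$ with a total mass tending to $0$. As in the proofs of Lemmas \ref{lemma:conditional_dens} and \ref{lemma:region_A}, after integrating over $u$ and rewriting via the contour $C$, the quantity to be estimated is, up to a factor $\exp\{-\sum_{j\le n} 1/j\}$ of order $1/n$,
\begin{align*}
\left|\frac{1}{2\pi i}\int_C \exp\left\{\sum_{j=1}^n \frac{j\,\f_{nj}(s,t)\,z^j}{j}\right\}z^{-n-1}\,dz\right|,
\end{align*}
i.e.\ the modulus of the coefficient of $z^n$ in $\exp\{\sum_{j=1}^n \f_{nj}(s,t)z^j\}$, where $\f_{nj}(s,t)$ is defined in (\ref{def_phi_nj}). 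The first step is therefore to get a good \emph{upper bound} on $|\f_{nj}(s,t)|$ that is valid for \emph{all} $j \le n$ and all $|t|>\delta c_n$, not merely the saddle-point asymptotics near $z_0(t)$. Since $\f_{nj}(s,t)$ is the characteristic function at argument $(s/b_n, t/c_n)$ of a nondegenerate random variable whose $S_{j1}-j$ component has a bounded continuous density, one has $|\f_{nj}(s,t)|\le \f_{nj}(0,t)\le \rho(\delta)<1$ for $|t/c_n|$ bounded away from $0$ and $\infty$ on $[\delta,\pi c_n/\dots]$; more precisely, because $S_{j1}/j$ concentrates, the relevant bound is $|\f_{nj}(s,t)| \le C\,\exp\{-cj t^2/n\}/j$ on $M<|t|\le$ (small) $c_n$, transitioning to $|\f_{nj}(s,t)|\le c'/j$ with $c'$ a constant $<1$-type factor once $|t|\gtrsim c_n$; I would isolate, for $|t|>\delta c_n$, the uniform estimate $\sup_{j\le n}|j\,\f_{nj}(s,t)| \le q < 1$ for $n$ large, using Condition \ref{condition_h}(ii)--(iii) to control the $h$-dependent terms in (\ref{def_phi_nj}) (they contribute $O(1/b_n)$ perturbations and do not destroy the contraction).

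The second step is to convert this into a bound on the $z^n$-coefficient. Writing $a_j = \f_{nj}(s,t)$, the coefficient of $z^n$ in $\exp\{\sum_{j=1}^n a_j z^j\}$ is a sum over partitions $\sum_k k\,m_k = n$ of $\prod_k a_k^{m_k}/m_k!$, whose modulus is at most the coefficient of $z^n$ in $\exp\{\sum_{j=1}^n |a_j| z^j\}$. Using $|a_j| \le q/j$ (with $q<1$) this is bounded by the coefficient of $z^n$ in $\exp\{q\sum_{j\ge1} z^j/j\} = (1-z)^{-q}$, namely $\binom{n+q-1}{n} = O(n^{q-1})$. Hence the full inner integrand in $t$ is $O(n^{-1})\cdot O(n^{q-1}) = O(n^{q-2})$ \emph{uniformly} in $t$ on the relevant range. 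Since the $t$-range $\{\delta c_n < |t|\}$ is, after using the periodicity/decay properly, effectively of length $O(c_n)=O(n^{1/2})$ once one also exploits the faster Gaussian-type decay $\exp\{-ct^2\}$ coming from $\f_{n1}$ in the moderate regime — or, in the far regime $|t|\gtrsim c_n$, one simply notes the integrand decays in $t$ because $\f_{n1}(s,t) = e^{-it/c_n}(1-it/c_n)^{-1}\{1+O(1/b_n)\}$ so $|\f_{n1}|\le (1+t^2/n)^{-1/2}$, giving an extra $|t|^{-1}$ per unit — the integral over $A_2$ is $O(n^{q-2})\cdot O(n^{1/2}) \to 0$ provided $q<3/2$, which certainly holds since $q<1$.

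The main obstacle is the \emph{uniform} modulus bound on $\f_{nj}(s,t)$ across the whole range $j\le n$ and $|t|>\delta c_n$: the saddle-point expansion (\ref{deBruijn_formula}) is only local near $z_0(t)$ and says nothing when $t/c_n$ is order one or larger, so one genuinely needs a separate elementary estimate. The clean way is to factor $\f_{nj}(s,t) = \E[e^{it(S_{j1}-j)/c_n}] \cdot (1 + r_{nj})$ where $r_{nj}$ collects the $h$-terms, which by Condition \ref{condition_h} and a Taylor/mean-value argument satisfies $|r_{nj}| = O(b_n^{-1})$ uniformly (here (\ref{condition1}) and (\ref{condition2}) are exactly what make $\{h(j/S_{j1})-h(1)\}S_{j1}$ and its exponential moments controllable even when $S_{j1}$ is far from $j$). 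The gamma characteristic function $\E[e^{it(S_{j1}-j)/c_n}] = e^{-itj/c_n}(1-it/c_n)^{-j}$ has modulus $(1+t^2/n)^{-j/2}\le (1+\delta^2)^{-j/2}$ on $A_2$, so $|j\,\f_{nj}(s,t)|\le j(1+\delta^2)^{-j/2}(1+O(b_n^{-1}))$, which is $\le q<1$ uniformly in $j\ge1$ for $n$ large (the maximum over $j$ of $j\,\rho^{j/2}$ with $\rho = (1+\delta^2)^{-1}<1$ is a fixed constant; for it to be $<1$ one may need $j=1$ handled by the extra Gaussian factor, or simply absorb the constant by redoing the partition bound with $|a_j|\le C\rho^{j/2}$ directly, still yielding a convergent $\sum |a_j| z^j$ and a coefficient bound $O(1)$, whence the $A_2$ integral is $O(n^{-1})\cdot O(1)\cdot O(n^{1/2})$ — which does \emph{not} vanish, so the Gaussian refinement $|a_j|\le C j^{-1}e^{-cjt^2/n}$ in the intermediate band, together with genuine decay in $t$ in the far band, is in fact needed and is where the real work lies). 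Once these two regimes are patched, the conclusion follows by dominated convergence.
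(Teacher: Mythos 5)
There is a genuine gap, and it sits exactly where you locate ``the real work'': the uniform bound on $|\f_{nj}(s,t)|$ for $|t|>\d c_n$. Your proposed factorization $\f_{nj}(s,t)=\E\bigl[e^{it(S_{j1}-j)/c_n}\bigr](1+r_{nj})$ with $|r_{nj}|=O(b_n^{-1})$ \emph{uniformly in $j$ and $t$} is not justified, and a Taylor/mean-value argument cannot deliver it. Writing the exponent as $iA+iB$ with $A$ the $h$-part, the estimate $|e^{iA}-1|\le|A|$ only yields the \emph{additive} bound $\E e^{iA+iB}=\E e^{iB}+O(\E|A|)=\E e^{iB}+O(b_n^{-1})$. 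On the region $|t|>\d c_n$ the main term $|\E e^{iB}|=(1+t^2/n)^{-j/2}\le(1+\d^2)^{-j/2}$ is exponentially small in $j$, so an additive error of order $b_n^{-1}$ swamps it completely; you would only get $|\f_{nj}|\lesssim b_n^{-1}$ for moderately large $j$, whence $\sum_j|\f_{nj}|\asymp n/b_n\to\infty$ and your coefficient bound on $\exp\{\sum_j|a_j|z^j\}$ blows up. What is needed is a \emph{multiplicative} error, and that is precisely what the paper obtains by keeping the $h$-terms inside the exponent and redoing the saddle-point analysis: after rescaling $t\mapsto tc_n$ the saddle moves to $z_0(t)=1/(1-it)$, and Condition~\ref{condition_h}(ii)--(iii) control $h$ and $h''$ along the line $1-it$, giving $\f_{nj}(s,t)\sim j^{-1}e^{-j(it+\log(1-it))}(1+o(1))/|t+i|$, i.e.\ the gamma characteristic function times a genuinely multiplicative correction. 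Your partition/majorization device for the $z^n$-coefficient (replacing the paper's exact identity $(-1)^n\binom{-\d_n}{n}\b_n^n$ by the coefficient of $z^n$ in $\exp\{\sum_j|a_j|z^j\}$) is a perfectly sound alternative for the second step, but it is fed by the first step you have not established.

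Two further points. First, your final bookkeeping does not close, and you say so yourself (``which does not vanish\dots is where the real work lies''; ``once these two regimes are patched''): a proof proposal that defers its decisive estimate is incomplete. Second, the decay in $t$ over the infinite region $|t|>\d c_n$ should not be sought in $\f_{n1}$ alone; it comes for free from the partition structure once the correct bound $|a_j|\le C_j(1+t^2/n)^{-j/2}$ with $\sum_j C_j z^j$ of at most logarithmic growth is in hand, because every partition $\sum_k k\,m_k=n$ contributes the full factor $(1+t^2/n)^{-n/2}$, so the $z^n$-coefficient is $\le(1+t^2/n)^{-n/2}\cdot O(n^{C})$ and $\int_{|t|>\d c_n}(1+t^2/n)^{-n/4}\,dt=\sqrt n\int_{|v|>\d}(1+v^2)^{-n/4}\,dv\to0$. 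This is the analogue of the paper's final bound $\int_{|t|>\d}(1+t^2)^{-n/4}\,dt\to0$. So the architecture of your argument can be repaired, but only by importing the saddle-point (or some other genuinely multiplicative) control of $\f_{nj}$ on $|t|>\d c_n$, which is the heart of the paper's proof and is absent from yours.
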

\begin{proof}
We consider the characteristic function:
\begin{align*}
\bar\f_{nj}(s,t)=E\exp\left\{is\frac{\left\{h(j/S_{j1})-h(1)\right\}S_{j1}+h'(1)\left(S_{j1}-j\right)}{b_n}+it\left(S_{j1}-j\right)\right\},
\end{align*}
so we replace $c_n$ by $1$ in (\ref{def_phi_nj}). This means that, for the saddle point analysis, the constant $c_n$ is replaced by $1$ in the function (\ref{def_f_general}). So we now define
\begin{align}
\label{def_f_general2}
\bar f_{n,s,t}(z)=is\frac{\left\{h(1/z)-h(1)\}z+h'(1)(z-1)\right\}}{b_n}+it(z-1) -z+\log z,\qquad \text{\rm Re}(z)>0,
\end{align}
The saddle point equation (\ref{saddle_point_eq2}) now turns into
\begin{align}
\label{saddle_point_eq3}
z=g_n(z),\qquad g_n(z)\stackrel{\text{\small def}}=\left(1-it\right)^{-1}\left\{1+\frac{is\{h(1/z)-h(1)\}z}{b_n}-\frac{is h'(1/z)}{b_n}+\frac{ish'(1)z}{b_n}\right\}
\end{align}
and has a unique solution in a neighborhood of $(1-it)^{-1}$ for the same reasons as before.

We define:
\begin{align*}
z_0(t)=1/(1-it).
\end{align*}
Then:
\begin{align*}
\bar f_{n,s,t}(z_0(t))&=-\frac{t^2+1}{1-it}-\log(1-it)-is\frac{h(1)-h(1-it)+sth'(1)}{b_n(1-it)}\\
&=-\frac{t^2+1}{1-it}+O\left(b_n^{-1}|t|\right),
\end{align*}
and
\begin{align*}
\bar f_{n,s,t}''(z_0(t))&=(t+i)^2-\frac{s(t+i)h''(1-it)}{b_n}\\
&=(t+i)^2+O\left(b_n^{-1}|t|\right),
\end{align*}
uniformly in $|t|>\d$, using Condition \ref{condition_h}.
This implies that, uniformly for $|t|>\d$,
\begin{align}
\label{deBruijn_formula2}
\f_{nj}(s,t)&=\frac{j^j}{\Gamma(j)}\int_{x=0}^{\infty}\exp\left\{jf_{n,s,t}(x)\right\}x^{-1}\,dx
\sim\frac{\a_n(t)e^{j\{\bar f_{n,s,t}(z_0(t))+1\}}}{j\sqrt{|\bar f_{n,s,t}''(z_0(t))|}}\nonumber\\
&=\frac{\a_n(t)\exp\left\{-j\left(it+\log(1-it)+O\left(b_n^{-1}\right)\right)\right\}}{j\left|(t+i)^2+O\left(b_n^{-1}|t|\right)\right|^{1/2}},\qquad j\to\infty,
\end{align}
where $\a_n(t)$ is a complex number with absolute value 1 and argument $\tfrac12\pi-\tfrac12\arg \bar f_{n,s,t}''(z_0(t))$.

So we find, using Cauchy's formula again, as in the proof of Lemma \ref{lemma:region_A}, and using Condition \ref{condition_h},
\begin{align*}
&\frac1{2\pi}\int_{-\pi}^{\pi}\E\exp\left\{i s U_n+i tc_n V_n+i nu W_n-niu\right\}\,du\nonumber\\
&\sim \exp\Biggl\{-\tfrac12ish''(1)(\log n)/b_n-\sum_{j=1}^n\frac1j\Biggr\}(-1)^n{- \d_n(t)\choose n}\b_n(t)^n\nonumber\\
&=\exp\Biggl\{-\tfrac12ish''(1)(\log n)/b_n-\sum_{j=1}^n\frac1j\Biggr\}\prod_{j=1}^n\left(1+\frac{\d_n(t)-1}{j}\right)\b_n(t)^n,
\end{align*}
where
\begin{align*}
\b_n(t)=\exp\left\{-(it+\log(1-it)+O\left(b_n^{-1}\right)\right\},
\end{align*}
and
\begin{align*}
\d_n(t)=\frac{\a_n}{\left|(t+i)^2+O\left(b_n^{-1}|t|\right)\right|^{1/2}}\,.
\end{align*}
Hence
\begin{align*}
&\exp\Biggl\{-\tfrac12ish''(1)(\log n)/b_n-\sum_{j=1}^n\frac1j\Biggr\}\prod_{j=1}^n\left(1+\frac{\d_n(t)-1}{j}\right)\b_n(t)^n\\
&=\frac1n\exp\Biggl\{-\tfrac12ish''(1)(\log n)/b_n-\g\Biggr\}\\
&\qquad\qquad\cdot\exp\left\{-n\left\{it+\log(1-it)+O\left(b_n^{-1}\right)\right\}+\frac{\a_n \sum_{j=1}^n1/j}{\left|(t+i)^2+O\left(b_n^{-1}|t|\right)\right|^{1/2}}\right\},
\end{align*}
implying
\begin{align*}
&\left|\frac1{2\pi}\int_{-\pi}^{\pi}\E\exp\left\{i s U_n+i tc_n V_n+i nu W_n-niu\right\}\,du\right|\\
&\le\frac1n\exp\left\{-n\left\{\tfrac12\log(1+t^2)+O\left(b_n^{-1}\right)\right\}+\frac{\sum_{j=1}^n1/j}{\left|(t+i)^2+O\left(b_n^{-1}|t|\right)\right|^{1/2}}\right\}\\
&\le \left|1+t^2\right|^{-n/4},
\end{align*}
for large $n$, uniformly for $|t|>\d$, using Condition \ref{condition_h}.
 It now follows that
\begin{align*}
&\left|\frac1{2\pi}\int_{|t|>\d c_n}\int_{u=-\pi}^{\pi}\E\exp\left\{i s U_n+i tV_n+i nu W_n-niu\right\}\,du\,dt\right|\\
&\left|\frac1{2\pi}\int_{|t|>\d}\int_{u=-\pi}^{\pi}\E\exp\left\{i s U_n+i tc_nV_n+i nu W_n-niu\right\}\,du\,dt\right|\\
&\le \int_{|t|>\d}(1+t^2)^{-n/4}\,dt\longrightarrow0,\qquad n\to\infty.
\end{align*}
\end{proof}

This leads to the main result of this section.

\begin{theorem}
\label{th:main_theorem}
Let $h$ satisfy Condition \ref{condition_h}. Then $U_n|(V_n,W_n)=(0,1)$ converges in law to a standard normal distribution.
\end{theorem}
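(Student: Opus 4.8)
The plan is to show that the characteristic function of $U_n$ conditional on $(V_n,W_n)=(0,1)$ converges pointwise, for every $s\in\R$, to $s\mapsto e^{-\frac12 s^2}$. Since this is the characteristic function of the standard normal law and is continuous at $s=0$, L\'evy's continuity theorem then yields the asserted convergence in law. So the whole argument reduces to a careful evaluation of that conditional characteristic function, for which the lemmas of this section have been prepared.

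First I would write the conditional characteristic function by Fourier inversion, inverting in the variable $u$ for the lattice variable $W_n$ (whose values lie on the lattice $n^{-1}\mathbb Z$) and in the variable $t$ for $V_n$, and normalizing by $\P\{W_n=1\}\,f_{V_n\mid W_n=1}(0)$, which by Lemma \ref{lemma:conditional_dens} is asymptotic to $e^{-\g}/(n\sqrt{2\pi})$. This is exactly the content of (\ref{main_conditional_charfu}):
\[
\E\left\{e^{isU_n}\bigm| V_n=0,\,W_n=1\right\}\sim\frac{ne^{\g}}{(2\pi)^{3/2}}\int_{t=-\infty}^{\infty}\int_{-\pi}^{\pi}\E e^{isU_n+itV_n+iuW_n-inu}\,du\,dt,\qquad n\to\infty.
\]

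Next I would split the outer integral over $t$ into the three regions $\{|t|\le M\}$, $A_1=\{M<|t|\le \d n^{1/2}\}$ and $A_2=\{|t|>\d n^{1/2}\}$, as is standard in inversion proofs for densities. On the central region, Lemma \ref{lemma:region_A} gives, for each fixed $M>0$,
\[
\frac{ne^{\g}}{(2\pi)^{3/2}}\int_{|t|\le M}\int_{-\pi}^{\pi}\E e^{isU_n+itV_n+iuW_n-inu}\,du\,dt\longrightarrow\frac{e^{-\frac12 s^2}}{\sqrt{2\pi}}\int_{-M}^M e^{-\frac12 t^2}\,dt.
\]
For the region $A_1$, the lemma preceding Lemma \ref{lemma:region_A_2} shows that, given $\e>0$, one can choose $M$ and $\d>0$ so that the corresponding integral has modulus below $\e$ for all large $n$; and for the region $A_2$, Lemma \ref{lemma:region_A_2} shows the corresponding integral tends to $0$. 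Combining these three facts by the usual $\limsup$ argument — first letting $n\to\infty$ with $M$ and $\d$ fixed, then letting $M\to\infty$ and using $\frac1{\sqrt{2\pi}}\int_{-M}^M e^{-t^2/2}\,dt\to1$ — gives $\E\{e^{isU_n}\mid V_n=0,W_n=1\}\to e^{-\frac12 s^2}$ for every $s\in\R$, which finishes the proof.

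The real work is not this assembly but the ingredients already established: the uniform saddle-point expansion (\ref{deBruijn_formula}) of $\f_{nj}$, its variants on $A_1$ and $A_2$, and the use of Cauchy's formula to recognize the relevant coefficient of $z^n$ as $(-1)^n{-\d_n(t)\choose n}\b_n(t)^n$. The main obstacle I anticipate, in making the plan rigorous, is controlling uniformity: the $o(1)$ and $O(\cdot)$ remainders must be uniform in $s$ over compact sets (so that L\'evy's theorem applies) and uniform in $t$ over each region, and the saddle-point estimates must remain valid out to $|t|$ of order $n^{1/2}$. This is precisely where Condition \ref{condition_h} enters — the growth bounds (\ref{condition1}) and (\ref{condition2}) on $h$ and $h''$ along the vertical line $\text{Re}(z)=1$ are what keep the expansion of $f_{n,s,t}$ at the approximate saddle $z_0(t)$ under control and force the $A_2$-integrand to decay like $(1+t^2)^{-n/4}$.
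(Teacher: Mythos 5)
Your proposal is correct and follows essentially the same route as the paper: the paper's proof of Theorem \ref{th:main_theorem} is exactly the assembly you describe — the Fourier inversion identity (\ref{main_conditional_charfu}), the three-region split of the $t$-integral handled by Lemma \ref{lemma:region_A}, the unnamed lemma for $M<|t|\le\d n^{1/2}$, and Lemma \ref{lemma:region_A_2}, followed by letting $M\to\infty$ and invoking the continuity theorem. Your closing remarks on where the real work lies (uniformity of the saddle-point remainders and the role of Condition \ref{condition_h} on the region $A_2$) accurately reflect the structure of the preceding lemmas.
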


\begin{proof}
The preceding lemma's imply
\begin{align*}
\frac{ne^{\g}}{(2\pi)^{3/2}}\int_{t=-\infty}^{\infty}\int_{-\pi}^{\pi}\E e^{isU_n+itV_v+niuW_n-inu}\,du\,dt\longrightarrow 
\frac{e^{-\tfrac12s^2}}{\sqrt{2\pi}}\int_{t=-\infty}^{\infty} e^{-\tfrac12t^2}\,dt=e^{-\tfrac12s^2},\qquad n\to\infty,
\end{align*}
where $\g$ is Euler's gamma. Hence
\begin{align*}
\E\left\{e^{isU_n}\bigm|(V_n,W_n)=(0,1)\right\}\sim\frac{ne^{\g}}{(2\pi)^{3/2}}\int_{t=-\infty}^{\infty}\int_{-\pi}^{\pi}\E e^{isU_n+itV_v+niuW_n-inu}\,du\,dt
\longrightarrow e^{-\tfrac12s^2},\qquad n\to\infty.
\end{align*}
\end{proof}

Theorem \ref{main_theorem} now follows from  the conditional representation from Section \ref{sec:intro}, definition (\ref{def_U_n}) of $U_n$, Remark \ref{remark_effect_conditioning} and Theorem \ref{th:main_theorem}.

\section{Conclusion}
\label{sec:conclusion}
We derived a general theorem (Theorem \ref{main_theorem}) for integrals of the Grenander estimator when the distribution is uniform from a representation in terms of Poisson and gamma random variables in \cite{piet_ron:83}. The result implies the main result of \cite{piet_ron:83} and gives also the limit behavior of the entropy functional. We corrected the limit distribution of the conditioning variables given in Lemma 3.1 of \cite{piet_ron:83} in Section \ref{sec:limit_V_W}. The methods used are rather different from the methods in \cite{piet_ron:83}, where a result in \cite{lecam:58} was used.

Our main result was inspired by \cite{bodhi:19} who derived a similar result from \cite{piet_ron:83} under different conditions. The main tools are Cauchy's formula and the saddle point method for integrals of analytic functions of a complex variable. A simple version of the approach is given in Section \ref{sec:Sparre_Andersen} to illustrate the method without the complications of the saddle point method.

\bibliographystyle{plainnat}
\bibliography{cupbook}
\end{document}